\theoremstyle{plain} 
\newtheorem{theorem}{Theorem}[section]
\newtheorem{lemma}[theorem]{Lemma}
\theoremstyle{definition}
\newtheorem{definition}[theorem]{Definition}
\newtheorem{assumption}[theorem]{Assumption} 
\theoremstyle{remark}
\newtheorem{remark}[theorem]{Remark}
\numberwithin{equation}{section}
\DeclareMathOperator{\dive}{div}
\begin{document}

\title{Analysis of the SORAS domain decomposition preconditioner for non-self-adjoint or indefinite problems
}
\titlerunning{SORAS preconditioner analysis for non-SPD problems}        

\author{Marcella Bonazzoli \and Xavier Claeys \and Fr\'ed\'eric Nataf \and Pierre-Henri Tournier}
\authorrunning{M. Bonazzoli, X. Claeys, F. Nataf, P.-H. Tournier} 

\institute{M. Bonazzoli \at
              Inria, Centre de Math\'ematiques Appliqu\'ees, CNRS, \'Ecole Polytechnique, Institut Polytechnique de Paris, 91128 Palaiseau, France.\\
              \email{marcella.bonazzoli@inria.fr}, corresponding author           
           \and
           X. Claeys \and  F. Nataf \and P.-H. Tournier \at
              Sorbonne Universit\'e, CNRS, Universit\'e de Paris, Inria, Laboratoire Jacques-Louis Lions, F-75005 Paris, France. 
              \email{xavier.claeys@sorbonne-universite.fr, frederic.nataf@sorbonne-universite.fr, tournier@ljll.math.upmc.fr}
}

\date{Received: date / Accepted: date}

\maketitle

\begin{abstract}
We analyze the convergence of the one-level overlapping domain decomposition preconditioner SORAS (Symmetrized Optimized Restricted Additive Schwarz) applied to a generic linear system whose matrix is not necessarily symmetric/self-adjoint nor positive definite. By generalizing the theory for the Helmholtz equation developed in [I.G. Graham, E.A. Spence, and J. Zou, {\em SIAM J.~Numer.~Anal.}, 2020], we identify a list of assumptions and estimates that are sufficient to obtain an upper bound on the norm of the preconditioned matrix, and a lower bound on the distance of its field of values from the origin. We stress that our theory is general in the sense that it is not specific to one particular boundary value problem. Moreover, it does not rely on a coarse mesh whose elements are sufficiently small. As an illustration of this framework, we prove new estimates for overlapping domain decomposition methods with Robin-type transmission conditions for the heterogeneous reaction-convection-diffusion equation {(to prove the stability assumption for this equation we consider the case of a coercive bilinear form, which is non-symmetric, though)}. 
\keywords{Non-self-adjoint problems \and indefinite problems \and domain decomposition \and preconditioners \and field of values \and reaction-convection-diffusion equation}
\subclass{65N55 \and 65F08 \and 65F10 \and 76R99}
\end{abstract}

\section{Introduction}



The discretization of several partial differential equations relevant in applications, such as the Helmholtz equation, the time-harmonic Maxwell equations or the reaction-convection-diffusion equation, yields linear systems whose matrices are not symmetric/self-adjoint or indefinite. 
The rigorous analysis of the convergence of preconditioned iterative methods for such problems is harder than for symmetric positive definite (SPD) problems. 
Indeed, in the SPD case, Hilbert space theorems such as the Fictitious Space lemma (see e.g.~\cite{Nepomnyaschikh:1991:MTT,Griebel:1995:ATA}) yield a powerful general framework of spectral analysis for domain decomposition preconditioners. In addition, in the non-SPD case the conjugate gradient method cannot be used, and the analysis of the spectrum of the preconditioned matrix is not sufficient for iterative methods such as GMRES suited for non-self-adjoint matrices. In fact, as stated in~\cite{Greenbaum:1996:ANI}, ``any nonincreasing convergence curve can be obtained with GMRES applied to a matrix having any desired eigenvalues''. In the literature, GMRES convergence estimates are based for instance on the field of values~\cite{Elman:1982:IML,Eisenstat:1983:VIM,Beckermann:2005:SRE} or on the pseudo-spectrum (see~\cite{Trefethen:2005:SSB} and references therein) of the preconditioned operator. For example, field of values bounds were derived for overlapping domain decomposition preconditioners {for non-symmetric parabolic problems which are small perturbations of SPD operators \cite{Chan:1996:ACT}, and later} for the high-frequency Helmholtz~\cite{GrSpZo:Helm:2017,GrSpZo:impedance,GoGrSp:heterHelm} and time-harmonic Maxwell~\cite{BoDoGrSpTo:Maxwell} equations. 

Here, by generalizing the work of~\cite{GrSpZo:impedance}, we analyze for generic problems the convergence of the preconditioned GMRES method in its weighted version~\cite{Essai:1998:wGMRES}. We identify a list of assumptions and estimates that are sufficient to obtain an upper bound on the norm of the preconditioned matrix, and a lower bound on the distance of its field of values from the origin. This analysis applies to a class of one-level overlapping Schwarz domain decomposition preconditioners, with Robin-type or more general absorbing transmission conditions on the interfaces between subdomains. This type of preconditioners with the basic Robin-type transmission conditions was first introduced in (\cite{KiSa:OBDD:2007}, 2007) for the Helmholtz equation and called OBDD-H (\emph{Overlapping Balancing Domain Decomposition for Helmholtz}). It was later studied in (\cite{HaJoNa:soras:2015}, 2015) for generic symmetric positive definite problems and viewed as a symmetric variant of the ORAS preconditioner (\cite{StGaTh:ORAS:2007}, 2007), hence called SORAS (\emph{Symmetrized Optimized Restricted Additive Schwarz}). 
Note that in \cite{KiSa:OBDD:2007} several one-level and two-level versions, with a coarse space based on plane waves, were tested numerically, and more than ten years later the one-level OBDD-H version was rigorously analyzed in \cite{GrSpZo:impedance}, for the Helmholtz equation.    
In \cite{HaJoNa:soras:2015} a two-level version, with a spectral coarse space, was rigorously analyzed for generic SPD problems. 
The present article gives, to the best of our knowledge, the first rigorous analysis of such one-level preconditioners for \emph{generic} non-SPD problems.

Furthermore, we apply our general framework to the case of convection-diffusion equations to obtain, for the first time, convergence bounds for one-level overlapping Schwarz domain decomposition preconditioners with Robin-type transmission conditions. 
For these equations, the two-level overlapping case, but with standard Dirichlet transmission conditions, was analyzed in~\cite{Cai:1991:ASA,Cai:1992:DAI}, where a coarse space is built from a coarse mesh whose elements are sufficiently small. As for the one-level non-overlapping case, it was studied with Robin or more general transmission conditions in e.g.~\cite{Lube:2000:AND,Nataf:1995:FCD}, see also \cite{JaNaRo:2001:OO2} for some numerical results. Apart from Schwarz methods, the Neumann–Neumann algorithm~\cite{Bourgat:1989:VFA}, which belongs to the substructuring family of domain decomposition methods, was generalized to convection-diffusion equations in \cite{Achdou:2000:DDP}, and a coarse space not based on a coarse mesh was proposed in~\cite{Alart:2000:MSA} although without convergence analysis. 

The paper is structured as follows. 
In section~\ref{sec:setting} we first describe in detail the considered class of domain decomposition preconditioners and introduce notation for the global and local inner products and norms. In section~\ref{sec:genThm} we state and prove the main theorem, which provides a general and practical tool for the rigorous convergence analysis of the preconditioner. This framework is applied in section~\ref{sec:RCDeq} to the case of the heterogeneous reaction-convection-diffusion equation. After specifying the global and local bilinear forms, inner products and norms and the discretization, we prove estimates for the assumptions of the theorem for this equation, without making any a priori assumption on the regime of the physical coefficients nor of the numerical parameters; {to prove the stability assumption for this equation we consider the case of a coercive bilinear form (which is non-symmetric, though)}. Finally, we discuss for a particular regime the resulting lower bound on the field of values, {and we test numerically the performance of the preconditioner.}

\section{Setting}
\label{sec:setting}

Let $A$ denote the $n \times n$ (potentially complex-valued) matrix arising from the discretization of the problem to be solved, posed in an open domain $\Omega \subset \mathbb{R}^d$. The matrix $A$ is not necessarily positive definite nor self-adjoint. This means that here we do \emph{not} necessarily require $A^*=A$, where $A^* \coloneqq \overline{A^{T}}$; note that ``self-adjoint matrix'' is a synonym for ``Hermitian matrix''. In particular, if $A$ is real-valued this means that here it does not need to be symmetric. 

The definition of the preconditioner is based on a set of overlapping open subdomains $\Omega_j, j=1,\dots,N$, such that $\Omega = \cup_{j=1}^N \Omega_j$ and each $\overline{\Omega_j}$ is a union of elements of the mesh $\mathcal{T}^h$ of $\Omega$. 
Then we consider the set $\mathcal{N}$ of the unknowns on the whole domain, so $\#\mathcal{N} = n$, and its decomposition $\mathcal{N} = \bigcup_{j=1}^N \mathcal{N}_j$ into the non-disjoint subsets corresponding to the different overlapping subdomains $\Omega_j$, with $\#\mathcal{N}_j = n_j$. Then one builds the following matrices (see e.g. \cite[§1.3]{DoJoNa:bookDDM}):
\begin{itemize}
\item
the \emph{restriction} matrices $R_j$ from $\Omega$ to the subdomain $\Omega_j$: they are $n_j \times n$ Boolean matrices whose $(i,i')$ entry equals $1$ if the $i$-th unknown in $\mathcal{N}_j$ is the $i'$-th one in $\mathcal{N}$ and vanishes otherwise; 
\item
the \emph{extension} by zero matrices from the subdomain $\Omega_j$ to $\Omega$, which are $n \times n_j$ Boolean matrices given by $R^T_j$;
\item
the \emph{partition of unity} matrices $D_j$, which are $n_j \times n_j$ diagonal matrices with real non-negative entries such that $\sum_{j=1}^N R_j^T D_j R_j = I$. They can be seen as matrices that properly weight the unknowns belonging to the overlap between subdomains;
\item
the \emph{local matrices} $B_j$, of size $n_j \times n_j$, arising from the discretization of subproblems posed in $\Omega_j$, with for instance Robin-type or absorbing\footnote{Absorbing boundary conditions are approximations of transparent boundary conditions. Basic absorbing boundary conditions are Robin-type boundary conditions, which consist in a  weighted combination of Neumann-type and Dirichlet-type boundary conditions. Their precise definition depends on the specific problem. For instance, for Maxwell equations impedance boundary conditions are Robin-type absorbing boundary conditions.} transmission conditions on the interfaces $\partial\Omega_j \setminus \partial\Omega$. 
\end{itemize}
Finally, the one-level Symmetrized Optimized Restricted Additive Schwarz (SORAS) preconditioner is defined as
\begin{equation}
\label{eq:SORAS}
M^{-1} \coloneqq \sum_{j=1}^N R_j^T D_j B_j^{-1} D_j R_j. 
\end{equation}
Note that here the preconditioner is not self-adjoint when $B_j$ is not self-adjoint, even if we maintain the SORAS name, where S stands for `Symmetrized'. 
In fact, this denomination was introduced in \cite{HaJoNa:soras:2015} for SPD problems, since in that case the SORAS preconditioner is a symmetric variant of the ORAS preconditioner $\sum_{j=1}^N R_j^T D_j B_j^{-1} R_j$. Thus, the adjective `Symmetrized' stands for the presence of the rightmost partition of unity $D_j$. We recall that the adjective `Restricted' indicates the presence of the leftmost partition of unity $D_j$. The adjective `Optimized' refers to the choice of transmission conditions other than standard Dirichlet conditions in the local matrices $B_j$, which can be better suited to the problem at hand and accelerate the convergence of the method.

The weighted GMRES method \cite{Essai:1998:wGMRES} differs from the standard one in the norm used for the residual minimization, which is not the standard Hermitian norm but a more general weighted norm. For vectors of degrees of freedom $\mathbf{V}, \mathbf{W} \in \mathbb{C}^n$, using the notation $(\mathbf{V}, \mathbf{W}) \coloneqq \mathbf{W}^*\mathbf{V}$ to indicate the Hermitian inner product, given a $n \times n$ self-adjoint positive definite matrix $F_\Omega$, we consider the weighted norm
\[
\Vert \mathbf{V} \rVert_{\Omega} \coloneqq (\mathbf{V}, \mathbf{V})_{F_\Omega}^{1/2}, \quad \text{where } (\mathbf{V}, \mathbf{W})_{F_\Omega} \coloneqq  (F_\Omega \mathbf{V}, \mathbf{W}) = \mathbf{W}^* F_\Omega \mathbf{V}.   
\]
Locally, on the subdomain $\Omega_j$, we consider a weighted norm represented by a $n_j \times n_j$ self-adjoint positive definite matrix $F_{\Omega_j}$: 
for vectors of degrees of freedom $\mathbf{V}^j, \mathbf{W}^j \in \mathbb{C}^{n_j}$ local to $\Omega_j$, 
we define
\[
\Vert \mathbf{V}^j \rVert_{\Omega_j} \coloneqq (\mathbf{V}^j, \mathbf{V}^j)_{F_{\Omega_j} }^{1/2}, \quad \text{where } (\mathbf{V}^j, \mathbf{W}^j)_{F_{\Omega_j}} \coloneqq (F_{\Omega_j} \mathbf{V}^j, \mathbf{W}^j)  = (\mathbf{W}^j)^* F_{\Omega_j} \mathbf{V}^j.
\]
Typically $F_{\Omega_j}$ is a Neumann-type matrix on $\Omega_j$, that is, coming from an inner product at the continuous level with no boundary integral.

\section{General theory}
\label{sec:genThm}

In order to apply Elman-type estimates for the convergence of weighted GMRES \cite{Essai:1998:wGMRES}, such as \cite[Theorem 5.1]{GrSpZo:Helm:2017} or its improvement \cite[Theorem 5.3]{BoDoGrSpTo:Maxwell}, we need to prove an upper bound on the weighted norm of the preconditioned matrix, and a lower bound on the distance of its weighted field of values from the origin. 
Recall that the \emph{field of values (or numerical range)} of a matrix $C$ \emph{with respect to} the inner product induced by a matrix $F$ is the set defined as
\[
W_F (C) = \set{ (\mathbf{V}, C \mathbf{V})_F | \mathbf{V} \in \mathbb{C}^n, \lVert \mathbf{V} \rVert_F=1 }.
\]
(Note that the convergence estimate for GMRES based on the field of values can be used only when this latter does not contain $0$.) 

The following theorem, which generalizes the theory for the Helmholtz equation developed in \cite{GrSpZo:impedance}, identifies assumptions that are sufficient to obtain the two bounds. 
In particular, the proof was inspired by the one of \cite[Theorem 3.11]{GrSpZo:impedance} and by the analysis in subsection \cite[§3.2]{GrSpZo:impedance}. 

We will need the notation for the commutator $[P,Q] \coloneqq PQ-QP$. 

\begin{theorem}
\label{thm:main}

For $j=1,\dots,N$, assume that for all global vectors of degrees of freedom $\mathbf{V} \in \mathbb{C}^n$ and local vectors of degrees of freedom $\mathbf{W}^j \in \mathbb{C}^{n_j}$ in $\Omega_j$
\begin{equation}
\label{eq:DAB}
(D_j R_j A \mathbf{V},\mathbf{W}^j)= (D_j B_j R_j\mathbf{V},\mathbf{W}^j).
\end{equation}
Suppose that there exists $\Lambda_0>0$ such that for all local vectors of degrees of freedom $\mathbf{W}^j \in \mathbb{C}^{n_j}$ in $\Omega_j$, $j=1,\dots,N$, we have 
\begin{equation}
\label{eq:converseStSp}
\biggl \lVert  \sum_{j=1}^N R_j^T  \mathbf{W}^j \biggr \rVert_\Omega^2 \le 
\Lambda_0 \sum_{j=1}^N \lVert \mathbf{W}^j\rVert_{\Omega_j}^2, 
\end{equation}
and $\Lambda_1>0$ such that for all global vectors of degrees of freedom $\mathbf{V} \in \mathbb{C}^n$
\begin{equation}
\label{eq:finOvr}
 \sum_{j=1}^N \lVert R_j \mathbf{V}\rVert_{\Omega_j}^2 
 \le \Lambda_1 \Vert \mathbf{V} \rVert_\Omega^2.
\end{equation}
For $j=1,\dots,N$, suppose also that there exist $C_{D,j}, C_{DB,j} > 0$ such that for all local vectors of degrees of freedom $\mathbf{W}^j, \mathbf{V}^j \in \mathbb{C}^{n_j}$ in $\Omega_j$
\begin{align}
&\lVert D_j \mathbf{W}^j \rVert_{\Omega_j} \le C_{D,j} \lVert \mathbf{W}^j  \rVert_{\Omega_j}, \label{eq:sim3-16} \\
& \lvert ([D_j, B_j] \mathbf{V}^j, \mathbf{W}^j) \rvert \le C_{DB,j} \lVert \mathbf{V}^j \rVert_{\Omega_j}  \lVert \mathbf{W}^j \rVert_{\Omega_j}, \label{eq:cDB}
\end{align}
and that $B_j$ satisfies the following inf-sup condition: there exists $C_{\textup{stab},j}>0$ such that for all local vectors of degrees of freedom $\mathbf{U}^j \in \mathbb{C}^{n_j}$
\begin{equation}
\label{eq:stability}
\lVert \mathbf{U}^j \rVert_{\Omega_j} \le 
C_{\textup{stab},j} \max_{\mathbf{W}^j \in \mathbb{C}^{n_j}\setminus\{0\}} \left (\frac{\lvert (B_j \mathbf{U}^j, \mathbf{W}^j) \rvert}{\lVert \mathbf{W}^j \rVert_{\Omega_j}} \right). 
\end{equation} 

Then, we obtain the following upper bound on the norm of the preconditioned matrix: 
\begin{empheq}[box=\fbox]{equation}
\label{eq:Ubound}
\max_{\mathbf{V} \in \mathbb{C}^n} \; \frac{\Vert M^{-1} A \mathbf{V} \rVert_\Omega}{\Vert \mathbf{V} \rVert_\Omega} 
\le \sqrt{\Lambda_0 \Lambda_1} \max_{j=1,\dots,N} \{ C_{D,j} (C_{\textup{stab},j} C_{DB,j} + C_{D,j})\}.
\end{empheq}

If in addition, for $j=1,\dots,N$, for all global vectors of degrees of freedom $\mathbf{V} \in \mathbb{C}^n$ and local vectors of degrees of freedom $\mathbf{W}^j \in \mathbb{C}^{n_j}$ in $\Omega_j$
\begin{equation}
\label{eq:FkD}
(D_j R_j F_\Omega \mathbf{V}, \mathbf{W}^j) = (D_j F_{\Omega_j} R_j \mathbf{V}, \mathbf{W}^j), 
\end{equation}
and there exists $C_{DF,j} > 0$ such that for all local vectors of degrees of freedom $\mathbf{V}^j, \mathbf{W}^j \in \mathbb{C}^{n_j}$ in $\Omega_j$
\begin{equation}
\label{eq:cDF}
\lvert ([D_j, F_{\Omega_j}] \mathbf{V}^j, \mathbf{W}^j) \rvert \le C_{DF,j} \lVert \mathbf{V}^j \rVert_{\Omega_j}  \lVert \mathbf{W}^j \rVert_{\Omega_j},
\end{equation}
then 
we obtain the following lower bound on the distance of the field of values of the preconditioned matrix from the origin:
\begin{empheq}[box=\fbox]{multline}
\label{eq:Lbound}
\min_{\mathbf{V} \in \mathbb{C}^n} \frac{\lvert (F_\Omega \mathbf{V}, M^{-1}A \mathbf{V}) \rvert}{ \lVert \mathbf{V} \rVert_\Omega^2} \ge 
 \frac{1}{\Lambda_0} 
-  \Lambda_1 \max_{j=1,\dots,N} \{ C_{D,j} C_{\textup{stab},j}C_{DB,j} \} \\
- \Lambda_1 \max_{j=1,\dots,N}  \{ C_{DF,j} (C_{\textup{stab},j}C_{DB,j} +C_{D,j}) \}.
\end{empheq}

\end{theorem}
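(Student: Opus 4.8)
My plan is to reduce everything to local estimates on each subdomain by first exploiting the consistency relation \eqref{eq:DAB}. Since \eqref{eq:DAB} holds for \emph{all} local test vectors $\mathbf{W}^j$ and the Hermitian inner product is non-degenerate, it upgrades to the matrix identity $D_j R_j A = D_j B_j R_j$. Substituting this into \eqref{eq:SORAS} gives $M^{-1}A = \sum_{j=1}^N R_j^T D_j (B_j^{-1} D_j B_j) R_j$, and the algebraic heart of the argument is the commutator splitting $B_j^{-1} D_j B_j = D_j + B_j^{-1}[D_j,B_j]$. Writing $\mathbf{U}^j := (B_j^{-1}D_j B_j) R_j \mathbf{V} = D_j R_j \mathbf{V} + \mathbf{T}^j$ with $\mathbf{T}^j := B_j^{-1}[D_j,B_j]R_j\mathbf{V}$, I get $M^{-1}A\mathbf{V} = \sum_j R_j^T D_j \mathbf{U}^j$, which isolates a leading term $\sum_j R_j^T D_j^2 R_j\mathbf{V}$ and a commutator remainder $\sum_j R_j^T D_j \mathbf{T}^j$.

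For the upper bound \eqref{eq:Ubound} I would estimate $\mathbf{T}^j$ first: from $B_j\mathbf{T}^j = [D_j,B_j]R_j\mathbf{V}$, the inf-sup condition \eqref{eq:stability} together with the commutator bound \eqref{eq:cDB} yields $\lVert \mathbf{T}^j\rVert_{\Omega_j}\le C_{\textup{stab},j}C_{DB,j}\lVert R_j\mathbf{V}\rVert_{\Omega_j}$. Combining this with the partition-of-unity bound \eqref{eq:sim3-16} applied to $D_j R_j\mathbf{V}$ gives $\lVert \mathbf{U}^j\rVert_{\Omega_j}\le (C_{D,j}+C_{\textup{stab},j}C_{DB,j})\lVert R_j\mathbf{V}\rVert_{\Omega_j}$, and one further use of \eqref{eq:sim3-16} gives $\lVert D_j\mathbf{U}^j\rVert_{\Omega_j}\le C_{D,j}(C_{D,j}+C_{\textup{stab},j}C_{DB,j})\lVert R_j\mathbf{V}\rVert_{\Omega_j}$. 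Then \eqref{eq:converseStSp} applied to the splitting $M^{-1}A\mathbf{V}=\sum_j R_j^T (D_j\mathbf{U}^j)$, followed by the finite-overlap bound \eqref{eq:finOvr}, assembles these local estimates into \eqref{eq:Ubound}.

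For the lower bound \eqref{eq:Lbound} I would test against $F_\Omega\mathbf{V}$ and write $(F_\Omega\mathbf{V},M^{-1}A\mathbf{V})=\sum_j (R_j F_\Omega\mathbf{V}, D_j\mathbf{U}^j)$, moving one $D_j$ across the self-adjoint weight and invoking \eqref{eq:FkD} to replace $D_j R_j F_\Omega\mathbf{V}$ by $D_j F_{\Omega_j}R_j\mathbf{V}$. The leading contribution $\sum_j(D_j R_j F_\Omega\mathbf{V}, D_j R_j\mathbf{V})$, after a second commutator splitting $D_j F_{\Omega_j}=F_{\Omega_j}D_j+[D_j,F_{\Omega_j}]$, produces the positive real quantity $\sum_j\lVert D_j R_j\mathbf{V}\rVert_{\Omega_j}^2$ plus a remainder controlled by \eqref{eq:cDF} and \eqref{eq:sim3-16}. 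The crucial step is that \eqref{eq:converseStSp} applied to $\mathbf{W}^j=D_j R_j\mathbf{V}$, together with the partition-of-unity identity $\sum_j R_j^T D_j R_j=I$, gives the reverse inequality $\sum_j\lVert D_j R_j\mathbf{V}\rVert_{\Omega_j}^2\ge \Lambda_0^{-1}\lVert\mathbf{V}\rVert_\Omega^2$, which is the source of the $1/\Lambda_0$ term. The remaining pieces — the $F_{\Omega_j}$-commutator remainder and the contribution of $\mathbf{T}^j$ (estimated by Cauchy--Schwarz in $(\cdot,\cdot)_{F_{\Omega_j}}$ together with \eqref{eq:cDF} and the bound on $\lVert\mathbf{T}^j\rVert_{\Omega_j}$) — are bounded subdomain-by-subdomain, regrouped as $C_{D,j}C_{\textup{stab},j}C_{DB,j}$ and $C_{DF,j}(C_{\textup{stab},j}C_{DB,j}+C_{D,j})$, and absorbed via $\max_j\{a_j+b_j\}\le\max_j a_j+\max_j b_j$ and \eqref{eq:finOvr}. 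Finally, since $|z|\ge\operatorname{Re}z$, taking real parts turns the lower bound on $\operatorname{Re}(F_\Omega\mathbf{V},M^{-1}A\mathbf{V})$ into the lower bound \eqref{eq:Lbound} on the distance of the field of values from the origin.

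The main obstacle is the bookkeeping of the lower bound: one must keep the leading term in the exact form $\sum_j\lVert D_j R_j\mathbf{V}\rVert_{\Omega_j}^2$ so that the converse splitting \eqref{eq:converseStSp} and the partition of unity deliver precisely $1/\Lambda_0$, while simultaneously tracking two layers of commutators (with $B_j$ and with $F_{\Omega_j}$) and grouping their constants so they collapse into the two maxima displayed in \eqref{eq:Lbound}. By contrast, the upper bound is comparatively routine once the commutator splitting of $B_j^{-1}D_j B_j$ is in place.
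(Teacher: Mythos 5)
Your proposal is correct and takes essentially the same route as the paper's proof: your $\mathbf{T}^j = B_j^{-1}[D_j,B_j]R_j\mathbf{V}$ is exactly the paper's central quantity $(B_j^{-1}D_jR_jA - D_jR_j)\mathbf{V}$, bounded in the same way by the inf-sup condition \eqref{eq:stability} together with \eqref{eq:cDB}, and both bounds are then assembled from \eqref{eq:converseStSp}, \eqref{eq:sim3-16}, \eqref{eq:finOvr} and the partition-of-unity identity just as in the paper, with identical constants. The only cosmetic differences are that you promote \eqref{eq:DAB} to a matrix identity at the outset (the paper uses it weakly inside the inner product) and you perform the two splittings in the lower bound in a slightly different order, which regroups into the same two maxima.
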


\begin{remark}
We will comment on assumptions \eqref{eq:DAB}, \eqref{eq:converseStSp}, \eqref{eq:finOvr}, \eqref{eq:FkD} in subsection \ref{subsec:commentsHp}. 
Note that in finite dimension, the constants in assumptions \eqref{eq:sim3-16}, \eqref{eq:cDB}, \eqref{eq:stability}, \eqref{eq:cDF} are finite, and in the statement of the theorem we actually mean that we are able to estimate these constants. 
\end{remark}


\begin{proof}
To obtain both bounds an important quantity is 
\[
\lVert (B_j^{-1} D_j R_j A - D_j R_j)\mathbf{V} \rVert_{\Omega_j}.
\]
For its estimate, for any vector of degrees of freedom $\mathbf{W}^j \in \mathbb{C}^{n_j}$ local to $\Omega_j$, write 
\begin{equation*}
\begin{split}
(B_j (B_j^{-1} D_j R_j A - D_j R_j)\mathbf{V}, \mathbf{W}^j) &= (D_j R_j A \mathbf{V}, \mathbf{W}^j) - (B_j D_j R_j\mathbf{V}, \mathbf{W}^j) \\
&\stackrel{\eqref{eq:DAB}}{=} (D_j B_j R_j \mathbf{V}, \mathbf{W}^j) - (B_j D_j R_j\mathbf{V}, \mathbf{W}^j) \\
&=  ([D_j, B_j] R_j \mathbf{V}, \mathbf{W}^j),
\end{split}
\end{equation*} 
where assumption \eqref{eq:DAB} was used. Thus we have found that $(B_j^{-1} D_j R_j A - D_j R_j)\mathbf{V}$ is the {solution to a local problem} with a right-hand side involving the commutator between the partition of unity and the local matrix. 
So by the stability bound \eqref{eq:stability}, we have:
\[
\lVert (B_j^{-1} D_j R_j A - D_j R_j)\mathbf{V} \rVert_{\Omega_j} \le 
C_{\textup{stab},j} \max_{\mathbf{W}^j \in \mathbb{C}^{n_j}\setminus\{0\}} \left (\frac{\lvert([D_j, B_j] R_j \mathbf{V}, \mathbf{W}^j)\rvert}{\lVert \mathbf{W}^j \rVert_{\Omega_j}} \right).
\]
Moreover by assumption \eqref{eq:cDB}
\[
\lvert ([D_j, B_j] R_j \mathbf{V}, \mathbf{W}^j) \rvert \le C_{DB,j} \lVert R_j\mathbf{V} \rVert_{\Omega_j}  \lVert \mathbf{W}^j \rVert_{\Omega_j} \; \forall \, \mathbf{W}^j.
\]
Therefore
\begin{equation}
\label{eq:estSigma}
\lVert (B_j^{-1} D_j R_j A - D_j R_j)\mathbf{V} \rVert_{\Omega_j} \le C_{\textup{stab},j} C_{DB,j} \lVert R_j \mathbf{V} \rVert_{\Omega_j}.
\end{equation}
Together with \eqref{eq:estSigma}, a direct consequence of \eqref{eq:estSigma} itself and assumption \eqref{eq:sim3-16} will be also used repeatedly: 
\begin{equation}
\label{eq:sim3-34bis}
\lVert B_j^{-1} D_j R_j A \mathbf{V}  \rVert_{\Omega_j}  \le (C_{\textup{stab},j} C_{DB,j} + C_{D,j}) \lVert R_j \mathbf{V}  \rVert_{\Omega_j}.
\end{equation}

Now, it is easy to obtain the upper bound \eqref{eq:Ubound}: for $\mathbf{V} \in \mathbb{C}^n$ we have 
\[
\begin{split}
\biggl \lVert \sum_{j=1}^N R_j^T D_j B_j^{-1} D_j R_j A \mathbf{V} & \biggr \rVert_\Omega^2 
\stackrel{\eqref{eq:converseStSp}}{\le} \Lambda_0  \sum_{j=1}^N \lVert D_j B_j^{-1} D_j R_j A \mathbf{V}\rVert_{\Omega_j}^2 \\
&\stackrel{\eqref{eq:sim3-16}}{\le} \Lambda_0 \sum_{j=1}^N C_{D,j}^2 \lVert B_j^{-1} D_j R_j A \mathbf{V}\rVert_{\Omega_j}^2 \\
&\stackrel{\eqref{eq:sim3-34bis}}{\le} \Lambda_0 \sum_{j=1}^N C_{D,j}^2(C_{\textup{stab},j} C_{DB,j} + C_{D,j})^2 \lVert R_j \mathbf{V}  \rVert_{\Omega_j}^2 \\
&\stackrel{\eqref{eq:finOvr}}{\le} \Lambda_0 \Lambda_1 \max_{j=1,\dots,N}  \{ C_{D,j}^2  (C_{\textup{stab},j} C_{DB,j} + C_{D,j} )^2 \} \lVert \mathbf{V}  \rVert_{\Omega}^2,
\end{split}
\]
where we have indicated above each inequality sign which equation was used.  

\medskip
The derivation of the lower bound \eqref{eq:Lbound} is more involved. First of all write 
\[
\begin{split}
&(F_\Omega \mathbf{V}, \sum_{j=1}^N R_j^T D_j B_j^{-1} D_j R_j A \mathbf{V}) =
\sum_{j=1}^N (F_\Omega \mathbf{V}, R_j^T D_j B_j^{-1} D_j R_j A \mathbf{V}) \\
& = \sum_{j=1}^N (D_j R_j F_\Omega \mathbf{V}, B_j^{-1} D_j R_j A \mathbf{V}) 
\stackrel{\eqref{eq:FkD}}{=} \sum_{j=1}^N (D_j  F_{\Omega_j} R_j \mathbf{V}, B_j^{-1} D_j R_j A \mathbf{V}),
\end{split}
\]
where, beside applying assumption \eqref{eq:FkD}, we have used the fact that the partition of unity matrices $D_j$ are real-valued and diagonal, hence symmetric, and the restriction matrices $R_j$ satisfy $(\mathbf{V},R_j^T \mathbf{W}^j) = (R_j \mathbf{V}, \mathbf{W}^j)$. 
Now, we make appear the commutator between the partition of unity and the local inner product matrix, and also the quantity $(B_j^{-1} D_j R_j A - D_j R_j)\mathbf{V}$:
\[
\begin{split}
& (D_j  F_{\Omega_j} R_j \mathbf{V}, B_j^{-1} D_j R_j A \mathbf{V}) \\
& = (F_{\Omega_j} D_j R_j \mathbf{V}, B_j^{-1} D_j R_j A \mathbf{V}) +  ([D_j,F_{\Omega_j}] R_j \mathbf{V}, B_j^{-1} D_j R_j A \mathbf{V})\\
&= (F_{\Omega_j} D_j R_j \mathbf{V}, D_j R_j \mathbf{V}) + (F_{\Omega_j} D_j R_j \mathbf{V}, (B_j^{-1} D_j R_j A -D_j R_j)\mathbf{V}) \\
& \qquad +  ([D_j,F_{\Omega_j}] R_j \mathbf{V}, B_j^{-1} D_j R_j A \mathbf{V}).
\end{split}
\]
Therefore
\begin{equation}
\label{eq:3terms}
\begin{split}
&\lvert (F_\Omega \mathbf{V}, M^{-1}A \mathbf{V}) \rvert \ge \\
&\sum_{j=1}^N \Vert D_j R_j \mathbf{V} \rVert_{\Omega_j}^2 
-  \sum_{j=1}^N \lvert(F_{\Omega_j} D_j R_j \mathbf{V}, (B_j^{-1} D_j R_j A -D_j R_j)\mathbf{V}) \rvert \\
&- \sum_{j=1}^N \lvert([D_j,F_{\Omega_j}] R_j \mathbf{V}, B_j^{-1} D_j R_j A \mathbf{V}) \rvert.
\end{split}
\end{equation}
For the first term in \eqref{eq:3terms} we use the partition of unity property $\sum_{j=1}^N R_j^T D_j R_j = I$ and assumption \eqref{eq:converseStSp} with $\mathbf{W}^j = D_j R_j \mathbf{V}$:
\[
\lVert \mathbf{V} \rVert_\Omega^2 = \biggl \lVert  \sum_{j=1}^N R_j^T (D_j R_j \mathbf{V}) \biggr \rVert_\Omega^2 \stackrel{\eqref{eq:converseStSp}}{\le} 
\Lambda_0 \sum_{j=1}^N \lVert D_j R_j \mathbf{V}\rVert_{\Omega_j}^2, 
\]
so
\[
\sum_{j=1}^N \lVert D_j R_j \mathbf{V}\rVert_{\Omega_j}^2 \ge \frac{1}{\Lambda_0} \lVert \mathbf{V} \rVert_\Omega^2. 
\]
For the second term in \eqref{eq:3terms}, we use first the Cauchy-Schwarz inequality: 
\[
\begin{split}
&\sum_{j=1}^N \lvert(F_{\Omega_j} D_j R_j \mathbf{V}, (B_j^{-1} D_j R_j A -D_j R_j)\mathbf{V}) \rvert \\
&\quad \le \sum_{j=1}^N \lVert D_j R_j \mathbf{V}\rVert_{\Omega_j} \lVert (B_j^{-1} D_j R_j A -D_j R_j)\mathbf{V} \rVert_{\Omega_j} \\
& \; \stackrel{\eqref{eq:sim3-16}, \eqref{eq:estSigma}}{\le} \sum_{j=1}^N C_{D,j}  C_{\textup{stab},j} C_{DB,j} \lVert R_j \mathbf{V} \rVert_{\Omega_j}^2 \\
& \quad \stackrel{\eqref{eq:finOvr}}{\le} \Lambda_1  \max_{j=1,\dots,N} \{ C_{D,j}C_{\textup{stab},j} C_{DB,j} \} \lVert \mathbf{V} \rVert_\Omega^2.
\end{split}
\]
Finally for the third term in \eqref{eq:3terms} we write
\[
\begin{split}
&\sum_{j=1}^N \vert ([D_j,F_{\Omega_j}] R_j \mathbf{V}, B_j^{-1} D_j R_j A \mathbf{V}) \rvert\\
& \quad \stackrel{\eqref{eq:cDF}}{\le} \sum_{j=1}^N C_{DF,j}  \lVert R_j \mathbf{V} \rVert_{\Omega_j}  \lVert B_j^{-1} D_j R_j A  \mathbf{V}\rVert_{\Omega_j} \\
& \quad \stackrel{\eqref{eq:sim3-34bis}}{\le} \sum_{j=1}^N C_{DF,j} (C_{\textup{stab},j} C_{DB,j} + C_{D,j}) \lVert R_j \mathbf{V} \rVert_{\Omega_j}^2 \\
& \quad \stackrel{\eqref{eq:finOvr}}{\le} \Lambda_1 \max_{j=1,\dots,N} \{ C_{DF,j} (C_{\textup{stab},j} C_{DB,j} +C_{D,j}) \} \lVert \mathbf{V} \rVert_\Omega^2.
\end{split}
\]
In conclusion, inserting these estimations in \eqref{eq:3terms} we obtain the lower bound \eqref{eq:Lbound}.

\end{proof}


\subsection{Comments on the assumptions of Theorem \ref{thm:main}}
\label{subsec:commentsHp}

Assumptions \eqref{eq:DAB} and \eqref{eq:FkD} {relate the global matrices with the local ones through the partition of unity and restriction matrices.} They may appear unconventional at first glance, but they are satisfied for quite natural choices of the local sesquilinear form and continuous norm on the subdomains. 
More precisely, if the $i$-th entry of the diagonal of $D_j$ is not zero, assumption \eqref{eq:DAB} requires that the $i$-th rows of $R_j A$ and $B_j R_j$ are equal; likewise assumption \eqref{eq:FkD} requires that the $i$-th rows of $R_j F_\Omega$ and $F_{\Omega_j} R_j$ are equal. 
First of all, note that typically the entries corresponding to $\partial \Omega_j \setminus \partial \Omega$ of the partition of unity $D_j$ are zero. Moreover, $B_j$ arises from the discretization of a local sesquilinear form that usually is like the global sesquilinear form yielding $A$ but with the integrals on $\Omega_j$ instead of $\Omega$ and with an additional boundary integral on $\partial \Omega_j \setminus \partial \Omega$. In this case assumption \eqref{eq:DAB} is satisfied. Likewise, assumption \eqref{eq:FkD} is satisfied if the local continuous norm yielding $F_{\Omega_j}$ is obtained from the global continuous norm yielding $F_\Omega$ just by replacing $\Omega$ with $\Omega_j$ in the integration domain. As an illustration, see the bilinear forms $a$, $a_j$ and the continuous norms $\lVert \cdot \rVert_{1,c}$, $\lVert \cdot \rVert_{1,c, \Omega_j}$ defined in §\ref{sec:RCDeq} for the reaction-convection-diffusion equation and the proof of Lemma~\ref{lem:hypglobloc}: {in this case the essential properties on the continuous level are those expressed in Remak~\ref{rem:globloc}.}

Assumptions \eqref{eq:converseStSp} and \eqref{eq:finOvr} are classical inequalities in the domain decomposition framework. 
Inequality \eqref{eq:converseStSp} is dubbed in \cite{GrSpZo:impedance} `a kind of converse to the stable splitting result', and it can be viewed as a continuity property of the reconstruction operator $\{ \mathbf{W}^j \}_{j=1}^N \mapsto  \sum_{j=1}^N R_j^T  \mathbf{W}^j$. 
In \cite[Lemma 3.6]{GrSpZo:impedance} the inequality is proved at the continuous level for the Helmholtz energy norm (see \cite[eq. (1.15)]{GrSpZo:impedance}) with  
\begin{equation*}
\Lambda_0 = \max_{j=1,\dots,N} \, \# \Lambda(j), \quad \text{where } \Lambda(j) \coloneqq \set{i | \Omega_j \cap \Omega_{i} \ne \emptyset},  
\end{equation*}
in other words, $\Lambda_0$ is the maximum number of neighboring  subdomains.   
Note that the proof in \cite[Lemma 3.6]{GrSpZo:impedance} (essentially consisting in the one in \cite[eq.~(4.8)]{GrSpZo:Helm:2017}) is more generally valid, for instance whenever the local continuous norm can be obtained from the global continuous norm just by replacing $\Omega$ with $\Omega_j$ in the integration domain, as before.     
{The equivalent of assumption \eqref{eq:converseStSp} at the continuous level can be found in Lemma~\ref{lem:RCDconverseStSp}.}

When the local and the global continuous norms are related as above again, it is immediate to prove inequality \eqref{eq:finOvr} with 
\begin{equation*}
\Lambda_1 = \max \set{m | \exists \, j_1 \ne \dots \ne j_m \text{ such that } \textup{meas}(\Omega_{j_1} \cap \dots \cap \Omega_{j_m}) \ne 0},
\end{equation*}
that is $\Lambda_1$ is the maximal multiplicity of the subdomain intersection (this constant is like the one defined in \cite[Lemma 7.13]{DoJoNa:bookDDM} 
and is slightly more precise than $\Lambda_0$ that was used in \cite[eq.~(2.10)]{GrSpZo:impedance}). 
{The equivalent of assumption \eqref{eq:finOvr} at the continuous level can be found in Lemma~\ref{lem:RCDfinOvr}.}
Note that $\Lambda_0$ and $\Lambda_1$ are geometric constants, related to the decomposition into overlapping subdomains.  

{The remaining assumptions can be also expressed in the finite element language, which is introduced in the next section: see equation \eqref{eq:RCDstability} for the stability assumption \eqref{eq:stability}; equation \eqref{eq:RCDsim3-16} for assumption \eqref{eq:sim3-16} on the partition of unity;  equations \eqref{eq:RCD-cDF}, \eqref{eq:RCD-cDB} for assumptions \eqref{eq:cDF},\eqref{eq:cDB} on the commutators between the partition of unity and the local (inner product and problem) matrices.}

 

\section{The reaction-convection-diffusion equation} 
\label{sec:RCDeq}

As an illustration of the general theory, we apply Theorem~\ref{thm:main} to the case of the heterogeneous reaction-convection-diffusion equation; recall that the convergence theory for the homogeneous, respectively heterogeneous, Helmholtz equation was developed in \cite{GrSpZo:impedance}, respectively \cite{GoGrSp:heterHelm}.  
Let $\Omega \subset \mathbb{R}^d$ be an open bounded polyhedral domain. We study the heterogeneous reaction-convection-diffusion problem in conservative form, with Robin-type and Dirichlet boundary conditions: 
\begin{equation}
\label{eq:RCDbvp}
\begin{cases}
c_0 u + \dive(\mathbf{a}u) -\dive(\nu \nabla u) = f & \text{in } \Omega,\\ 
\nu \frac{\partial u}{\partial n} -\frac{1}{2} \mathbf{a}\cdot \mathbf{n} \, u + \alpha u = g & \text{on } \Gamma_R,\\
u = 0 & \text{on } \Gamma_D,
\end{cases}
\end{equation}
where $\partial \Omega = \Gamma = \Gamma_R \cup \Gamma_D$, 
$\mathbf{n}$ is the outward-pointing unit normal vector to $\Gamma$, 
{$c_0 \in \mathrm{L}^\infty(\Omega)$, $\mathbf{a} \in \mathrm{L}^\infty(\Omega)^d$, $\dive \mathbf{a} \in \mathrm{L}^\infty(\Omega)$, $\nu \in \mathrm{L}^\infty(\Omega)$, $f \in \mathrm{L}^2(\Omega)$, $g \in \mathrm{L}^2(\Gamma_R)$, $\alpha \in \mathrm{L}^\infty(\Omega)$ (in this case all quantities are real-valued). 
With the notation 
\[
\tilde{c} \coloneqq  c_0 + \frac{1}{2} \dive \mathbf{a}, 
\]
suppose that there exist $\tilde{c}_- > 0$, $\tilde{c}_+ > 0$ such that 
\begin{equation}
\label{eq:hypctilde}
\tilde{c}_- \le \tilde{c}(\mathbf{x}) \le \tilde{c}_+ \; \text{a.e. in } \Omega,
\end{equation}
(the positiveness of $\tilde{c}(\mathbf{x})$ is a classical assumption in reaction-convection-diffusion equation literature), and there exist $\nu_- > 0$, $\nu_+ > 0$ such that 
\[
\nu_- \le \nu(\mathbf{x}) \le \nu_+ \; \text{a.e. in } \Omega,
\]
and $\alpha(\mathbf{x}) \ge 0$ a.e.~in $\Omega$. }
Note that the appropriate Robin-type boundary condition (on $\Gamma_R$) here is not simply $\nu \frac{\partial u}{\partial n} + \alpha u = g$; we will comment below about a possible choice of $\alpha$, see \eqref{eq:exalpha}. 
Now, set $\mathrm{H}_{0,D}^1(\Omega) \coloneqq \set{v \in \mathrm{H}^1(\Omega) | v=0 \text{ on } \Gamma_D}$. 
In order to find the variational formulation, multiply the equation by a test function $v \in \mathrm{H}_{0,D}^1(\Omega)$ and integrate over $\Omega$:
\[
\int_\Omega \Bigl ( c_0 u v  + \frac{1}{2} \dive(\mathbf{a}u) v + \frac{1}{2} \dive(\mathbf{a}u)v -\dive(\nu \nabla u) \,v \Bigr) = \int_\Omega fv. 
\]
For the first divergence term use the identity $\dive(\mathbf{a}u) =  \dive(\mathbf{a})u+ \mathbf{a} \cdot \nabla u$, while for the second integrate by parts: 
\[
\int_\Omega \frac{1}{2} \dive(\mathbf{a}u) v = \int_\Omega - \frac{1}{2} u \, \mathbf{a} \cdot \nabla v + \int_{\partial \Omega} \frac{1}{2} \mathbf{a}\cdot \mathbf{n} \, u v,
\]
and, also by integration by parts, 
\[
\int_\Omega -\dive(\nu \nabla u) \,v = \int_\Omega \nu \nabla u \cdot \nabla v - \int_{\partial \Omega} \nu \frac{\partial u}{\partial n} v.
\]
Therefore, imposing the boundary conditions, the variational formulation is: find $u \in \mathrm{H}_{0,D}^1(\Omega)$ such that
\[
a(u,v) = F(v), \quad \text{for all } v \in \mathrm{H}_{0,D}^1(\Omega),
\]
where $a$ is a non-symmetric bilinear form defined as
\[
a(u,v) = \int_\Omega \Bigl ( \tilde{c} u v + \frac{1}{2}  \mathbf{a} \cdot \nabla u \, v - \frac{1}{2} u \, \mathbf{a} \cdot \nabla v  + \nu \nabla u \cdot \nabla v \Bigr ) + \int_{\Gamma_R} \alpha u v.
\]
and 
\[
F(v) \coloneqq  \int_\Omega fv +  \int_{\Gamma_R} g v.
\]
Define the weighted scalar product and norm
\[
(u,v)_{1,c} \coloneqq \int_\Omega \Bigl ( \tilde{c} u v+ \nu \nabla u \cdot \nabla v \Bigr ), \qquad \lVert u \rVert_{1,c} \coloneqq (u,u)_{1,c}^{1/2}.
\]
On each subdomain $\Omega_j$ we consider the local problem with bilinear form
\[
a_j(u,v) \coloneqq \int_{\Omega_j} \Bigl ( \tilde{c} u v + \frac{1}{2}  \mathbf{a} \cdot \nabla u \, v - \frac{1}{2} u \, \mathbf{a} \cdot \nabla v  + \nu \nabla u \cdot \nabla v \Bigr ) + \int_{\partial \Omega_j \setminus \Gamma_D} \alpha u v,
\]
where we impose absorbing transmission conditions on the subdomain interface $\partial\Omega_j \setminus \partial\Omega$: 
for instance, we can choose a zeroth-order Taylor approximation of transparent conditions given by 
\begin{equation}
\label{eq:exalpha}
\alpha = \sqrt{(\mathbf{a}\cdot\mathbf{n})^2+4c_0\nu}/2 
\end{equation}
(see e.g.~\cite{JaNaRo:2001:OO2} and the references therein).   
We define the local weighted scalar product and norm
\[
(u,v)_{1,c,\Omega_j} \coloneqq \int_{\Omega_j} \Bigl ( \tilde{c} u v+ \nu \nabla u \cdot \nabla v \Bigr ), 
\qquad \lVert u \rVert_{1,c, \Omega_j} \coloneqq (u,u)_{1,c, \Omega_j}^{1/2},
\]
which would correspond to Neumann-type boundary conditions on $\partial \Omega_j$. 
Set 
\begin{align*}
\tilde{c}_{+,j} &\coloneqq \lVert \tilde{c} \rVert_{\mathrm{L}^\infty(\Omega_j)}, & \tilde{c}_{-,j} &\coloneqq \lVert \tilde{c}^{-1} \rVert_{\mathrm{L}^\infty(\Omega_j)}^{-1}, & &\text{so } \tilde{c}_{-,j} \le \tilde{c}(\mathbf{x}) \le \tilde{c}_{+,j} \; \text{a.e. in } \Omega_j, \\
\nu_{+,j} &\coloneqq \lVert \nu \rVert_{\mathrm{L}^\infty(\Omega_j)}, & \nu_{-,j} &\coloneqq \lVert \nu^{-1} \rVert_{\mathrm{L}^\infty(\Omega_j)}^{-1}, 
& &\text{so } \nu_{-,j} \le \nu(\mathbf{x}) \le \nu_{+,j} \; \text{a.e. in } \Omega_j.
\end{align*}

\begin{remark}
\label{rem:globloc}
For $u,v \in \mathrm{H}^1(\Omega)$, if $u$ or $v$ are supported in $\overline{\Omega}_j$ and thus vanish on $\partial \Omega_j\setminus \partial \Omega$, then
\[
a(u,v) = a_j(u,v), \quad \text{and} \quad (u,v)_{1,c} = (u,v)_{1,c,\Omega_j}.
\] 
\end{remark}

For the finite element discretization, let $\mathcal{T}^h$ be a family of conforming simplicial meshes of $\Omega$ that are $h$-uniformly shape regular as the mesh diameter $h$ tends to zero. We consider finite elements of order $r$
\[
\mathcal{V}^h = \set{v_h \in C^0(\overline{\Omega}), v_h\vert_{\tau} \in \mathbb{P}_{r-1}(\tau) \: \forall \, \tau \in \mathcal{T}^h,  v_h\vert_{\Gamma_D} = 0} \subset \mathrm{H}_{0,D}^1(\Omega). 
\]
Consider nodal basis functions $\varphi_i, i=1,\dots,n$ (for example Lagrange basis functions), in duality with the degrees of freedom associated with nodes $\mathbf{x}_j, j=1,\dots,n$, that is $\varphi_i(\mathbf{x}_j) = \delta_{ij}$.  
Thus we can define the standard nodal Lagrange interpolation operator $\Pi^h v = \sum_{i=1}^n v(\mathbf{x}_i) \varphi_i$. 
Assume that $\mathcal{V}^h$ satisfies the standard interpolation error estimate (see e.g. \cite[§3.1]{Ciarlet:book:1978}): 
for $\tau \in \mathcal{T}^h$, provided $v \in \mathrm{H}^r(\tau)$ 
\begin{equation}
\label{eq:errEst}
\lVert (I - \Pi^h)v \rVert_{\mathrm{L}^2(\tau)} + h \lvert (I - \Pi^h)v \rvert_{\mathrm{H}^1(\tau)} \le C_\Pi h^r \lvert v \rvert_{\mathrm{H}^r(\tau)}.   
\end{equation}
Assume that the subdomains $\Omega_j$ are polyhedra with characteristic length scale $H_\textup{sub}$, which means 
\begin{definition}[Characteristic length scale]
\label{def:lengthscale}
A domain has characteristic length scale $L$ if its diameter $\sim L$, its surface area $\sim L^{d-1}$, and its volume $\sim L^d$, 
where $\sim$ means uniformly bounded from below and above.
\end{definition}
\noindent
For each $j=1,\dots,N$, denote by $\mathcal{V}_j^h$ the space of functions in $\mathcal{V}^h$ restricted to $\overline{\Omega}_j$.  
So, $A$, $F_\Omega$, $B_j$, $F_{\Omega_j}$ are defined as the matrices arising, respectively, from the finite element discretization of $a$, $(\cdot,\cdot)_{1,c}$ on $\mathcal{V}^h$, and $a_j$, $(\cdot,\cdot)_{1,c,\Omega_j}$ on $\mathcal{V}_j^h$: for $v_h, w_h \in \mathcal{V}^h$ with vectors of degrees of freedom $\mathbf{V}, \mathbf{W} \in \mathbb{R}^n$, and for $v_h^j, w_h^j \in \mathcal{V}_j^h$ with vectors of degrees of freedom $\mathbf{V}^j, \mathbf{W}^j \in \mathbb{R}^{n_j}$
\begin{align}
a(v_h,w_h) &= (A\mathbf{V},\mathbf{W}), & a_j(v_h^j,w_h^j) &= (B_j\mathbf{V}^j,\mathbf{W}^j), \label{eq:relsBilForms}\\
(v_h,w_h)_{1,c} &= (F_\Omega \mathbf{V},\mathbf{W}), & (v_h^j,w_h^j)_{1,c,\Omega_j} &= (F_{\Omega_j} \mathbf{V}^j,\mathbf{W}^j). \label{eq:relsNorms}
\end{align}

Consider partition of unity functions $\chi_j$, $j=1,\dots,N$, such that $\sum_{j=1}^N \chi_j = 1$ in $\overline{\Omega}$, and $\mathrm{supp}( \chi_j )\subset \Omega_j$, so in particular they are zero on $\partial \Omega_j \setminus \partial \Omega$. 
Assume that 
\begin{equation}
\label{eq:2-21}
\lVert \partial_\mathbf{x}^\beta \chi_j \rVert_{\infty,\tau} \le C_\textup{dPU} \frac{1}{\delta^{|\beta|}} \quad \text{ for all } \tau \in \mathcal{T}_h \; \text{and multi-index $\beta$ with $|\beta|\le r$},
\end{equation} 
where $\delta$ is the size of the overlap between subdomains, and $C_\textup{dPU}$ is required to be independent of the simplex $\tau$ and of the derivative multi-index $\beta$. 
The diagonal matrices $D_j$ are constructed by interpolation of the functions $\chi_j$, so the vector of degrees of freedom of $\Pi^h(\chi_j v_h)$ is $D_j R_j  \mathbf{V}$.

Next we need to introduce a technical ingredient, namely so-called multiplicative
trace inequalities. Such estimates can be found e.g.~in
\cite{Grisvard:book:1985}.
\begin{lemma}[{Multiplicative trace inequality, \cite[last eq. on page 41]{Grisvard:book:1985}}]
	\label{lem:MTI}
For any bounded Lipschitz open subset $\omega\subset \mathbb{R}^d$ there exists
    $C_\textup{tr}(\omega)>0$ such that, for all $u\in \mathrm{H}^{1}(\omega)$, we have
    $\Vert u\Vert_{\mathrm{L}^{2}(\partial\omega)}^{2}\leq C_\textup{tr}(\omega)(
    \Vert u\Vert_{\mathrm{L}^{2}(\omega)}\Vert \nabla u\Vert_{\mathrm{L}^{2}(\omega)}+
    \Vert u\Vert_{\mathrm{L}^{2}(\omega)}^{2}/\mathrm{diam}(\omega))$.
\end{lemma}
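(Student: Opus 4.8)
The plan is to prove the estimate by applying the divergence theorem to a vector field that is uniformly transversal to $\partial\omega$. I would first reduce to the case $u\in C^\infty(\overline\omega)$: since $\omega$ is bounded Lipschitz, $C^\infty(\overline\omega)$ is dense in $\mathrm{H}^1(\omega)$ and the trace operator $\mathrm{H}^1(\omega)\to\mathrm{L}^2(\partial\omega)$ is continuous, so both sides of the claimed inequality are continuous in the $\mathrm{H}^1(\omega)$-norm and it suffices to establish the bound for smooth $u$ and pass to the limit.

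The heart of the argument is to produce a Lipschitz vector field $\mathbf{F}\colon\overline\omega\to\mathbb{R}^d$ and a constant $c_0>0$, depending only on the Lipschitz character of $\omega$, such that $\mathbf{F}\cdot\mathbf{n}\ge c_0$ a.e.\ on $\partial\omega$, with $\Vert\mathbf{F}\Vert_{\mathrm{L}^\infty(\omega)}\le C_2$ and $\Vert\dive\mathbf{F}\Vert_{\mathrm{L}^\infty(\omega)}\le C_1/\mathrm{diam}(\omega)$. When $\omega$ is star-shaped with respect to a ball $B(x_0,\rho)$ with $\rho\sim\mathrm{diam}(\omega)$ one may simply take $\mathbf{F}(x)=x-x_0$, for which $\dive\mathbf{F}=d$, $\Vert\mathbf{F}\Vert_{\mathrm{L}^\infty(\omega)}\le\mathrm{diam}(\omega)$, and $(x-x_0)\cdot\mathbf{n}(x)\ge\rho$ on $\partial\omega$. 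For a general bounded Lipschitz $\omega$ one writes it as a finite union of such star-shaped pieces, or builds $\mathbf{F}$ from the local graph charts and glues them with a partition of unity subordinate to a finite boundary covering; the correct dependence on $\mathrm{diam}(\omega)$ is obtained by rescaling to a reference domain of unit diameter, on which the Lipschitz constant, and hence every geometric quantity, is scale-invariant.

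With $\mathbf{F}$ in hand I would apply the divergence theorem to $|u|^2\mathbf{F}$, using $\nabla(|u|^2)=2\,\mathrm{Re}(\bar u\,\nabla u)$, to get
\[
\int_{\partial\omega}|u|^2\,(\mathbf{F}\cdot\mathbf{n})
=\int_\omega |u|^2\,\dive\mathbf{F}
+2\int_\omega \mathrm{Re}(\bar u\,\mathbf{F}\cdot\nabla u).
\]
Bounding the left-hand side from below by $c_0\Vert u\Vert_{\mathrm{L}^2(\partial\omega)}^2$ and the right-hand side from above via the $\mathrm{L}^\infty$ bounds on $\dive\mathbf{F}$ and $\mathbf{F}$ together with the Cauchy--Schwarz inequality, I obtain
\[
c_0\,\Vert u\Vert_{\mathrm{L}^2(\partial\omega)}^2
\le \frac{C_1}{\mathrm{diam}(\omega)}\,\Vert u\Vert_{\mathrm{L}^2(\omega)}^2
+2C_2\,\Vert u\Vert_{\mathrm{L}^2(\omega)}\,\Vert\nabla u\Vert_{\mathrm{L}^2(\omega)},
\]
and dividing by $c_0$ yields the claim with $C_\textup{tr}(\omega)=\max\{C_1,2C_2\}/c_0$.

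I expect the only genuine difficulty to be the construction of the transversal field $\mathbf{F}$ with the stated $\mathrm{L}^\infty$ bounds and, in particular, the right scaling in $\mathrm{diam}(\omega)$, for an arbitrary bounded Lipschitz domain; the existence of such a globally transversal field is essentially a reformulation of the Lipschitz (uniform cone) condition. Everything downstream --- the integration by parts and the elementary estimates --- is then routine. Since the statement only asserts existence of some $C_\textup{tr}(\omega)$, one may alternatively invoke \cite{Grisvard:book:1985} directly; the sketch above explains why the particular homogeneous form of the bound, with the factor $1/\mathrm{diam}(\omega)$, is the natural one and why $C_\textup{tr}(\omega)$ can be taken to depend on $\omega$ only through its Lipschitz character, which is what makes the estimate useful across subdomains of different sizes.
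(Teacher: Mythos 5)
Your proposal is correct, and it is essentially the standard argument behind the result the paper simply cites: the paper gives no proof of Lemma~\ref{lem:MTI}, invoking \cite[last eq.\ on page 41]{Grisvard:book:1985}, and Grisvard's proof is precisely this divergence-theorem (Rellich-type) argument applied to $|u|^2\mathbf{F}$ with a Lipschitz vector field uniformly transversal to $\partial\omega$, whose existence follows from the uniform cone condition. Your closing observation that the inequality is homothety-invariant, so that $C_\textup{tr}(\omega)$ depends only on the shape of $\omega$ and not on its diameter, is exactly the point the paper makes in the remark following the lemma to justify the uniform constant $C_\textup{tr}$ across subdomains.
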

\noindent 
Although the constant $C_\textup{tr}(\omega)$ above does a priori depend on the shape of
$\omega$, it does not depend on its diameter (it is invariant under homothety). In the sequel
we shall assume that there exists a \textit{fixed} constant $C_\textup{tr}>0$ such that we have
$C_\textup{tr}(\Omega_{j})<C_\textup{tr}$. This holds for example if the subdomains are assumed to 
be uniformly star-shaped i.e. there exists a fixed constant $\mu>0$ such that, for each $j$ there
exists $\boldsymbol{x}_{\Omega_{j}}\in\Omega_{j}$ satisfying 
\begin{equation}
  \begin{aligned}
    &  \forall \boldsymbol{x}\in\partial\Omega_{j},\; \lbrack \boldsymbol{x},
    \boldsymbol{x}_{\Omega_{j}}\rbrack\subset \overline{\Omega}_{j}\quad \text{and} \\
    &\boldsymbol{n}_{j}(\boldsymbol{x})\cdot (\boldsymbol{x}- \boldsymbol{x}_{\Omega_{j}})
\geq \mu \vert \boldsymbol{x}- \boldsymbol{x}_{\Omega_{j}}\vert 
  \end{aligned}
\end{equation}
\begin{assumption}
The multiplicative trace estimates of Lemma~\ref{lem:MTI} hold uniformly for all subdomains.
\end{assumption}
This assumption allows to derive uniform upper bounds for the continuity modulus of the bilinear forms $a(\;,\;)$ and $a_{j}(\;,\;)$. 

\begin{lemma}[Continuity of the bilinear forms $a$ and $a_j$]
\label{lem:RCDcont}
Assume that $\Omega$ has characteristic length scale $L$ in the sense of Definition \ref{def:lengthscale}. 
Then for all $u, v \in \mathrm{H}^1(\Omega)$
\[
a(u,v) \le  
C_{\textup{cont}} 
\lVert u \rVert_{1,c} \lVert v \rVert_{1,c}, 
\]
where
\[
C_{\textup{cont}} =  \frac{\tilde{c}_+}{\tilde{c}_-} \frac{\nu_+}{\nu_-} +
\frac{1}{2}  \frac{\lVert \mathbf{a} \rVert_{\mathrm{L}^\infty(\Omega)}}{\sqrt{\nu_- \tilde{c}_-}} +
\frac{\lVert \alpha \rVert_{\mathrm{L}^\infty(\Omega)} C_\textup{tr}}{\sqrt{\tilde{c}_-}} 
\left( \frac{1}{L \sqrt{\tilde{c}_-}} + \frac{1}{2\sqrt{\nu_-}} \right).
\]
Similarly for all $u, v \in \mathrm{H}^1(\Omega_j)$
\begin{equation}
\label{eq:RCDloccont}
a_j(u,v) \le  
C_{\textup{cont},j} 
\lVert u \rVert_{1,c,\Omega_j} \lVert v \rVert_{1,c,\Omega_j},
\end{equation}
where
\begin{equation}
\label{eq:Ccontj}
C_{\textup{cont},j} = \frac{\tilde{c}_{+,j}}{\tilde{c}_{-,j}} \frac{\nu_{+,j}}{\nu_{-,j}} +
\frac{1}{2}  \frac{\lVert \mathbf{a} \rVert_{\mathrm{L}^\infty(\Omega_j)}}{\sqrt{\nu_{-,j} \tilde{c}_{-,j}}} +
\frac{\lVert \alpha \rVert_{\mathrm{L}^\infty(\Omega_j)} C_\textup{tr}}{\sqrt{\tilde{c}_{-,j}}} 
\left( \frac{1}{H_\textup{sub} \sqrt{\tilde{c}_{-,j}}} + \frac{1}{2\sqrt{\nu_{-,j}}} \right). 
\end{equation}
\end{lemma}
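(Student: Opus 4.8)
The plan is to split $a(u,v)$ into three groups of terms and bound each separately against the product $\lVert u \rVert_{1,c}\lVert v \rVert_{1,c}$, so that the three resulting constants are precisely the three summands of $C_\textup{cont}$. Throughout I would use only Cauchy--Schwarz, the pointwise coefficient bounds $\tilde c_-\le\tilde c\le\tilde c_+$ and $\nu_-\le\nu\le\nu_+$, and the two elementary comparisons that follow directly from the definition of $\lVert\cdot\rVert_{1,c}$, namely
\[
\lVert u\rVert_{\mathrm{L}^2(\Omega)}\le\tilde c_-^{-1/2}\lVert u\rVert_{1,c},\qquad \lVert\nabla u\rVert_{\mathrm{L}^2(\Omega)}\le\nu_-^{-1/2}\lVert u\rVert_{1,c}.
\]
Since $a_j$ has exactly the same algebraic form as $a$ with $\Omega$ replaced by $\Omega_j$, the Robin integral taken over $\partial\Omega_j\setminus\Gamma_D\subset\partial\Omega_j$, and the coefficients replaced by their subdomain values, the estimate for $a_j$ follows by running the identical computation on $\Omega_j$; I therefore describe only the global case.

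First I would treat the reaction and diffusion terms $\int_\Omega(\tilde c\,uv+\nu\nabla u\cdot\nabla v)$. Bounding $\tilde c\le\tilde c_+$, $\nu\le\nu_+$ and applying Cauchy--Schwarz in $\mathrm{L}^2$ to each term gives $\tilde c_+\lVert u\rVert_{\mathrm{L}^2}\lVert v\rVert_{\mathrm{L}^2}+\nu_+\lVert\nabla u\rVert_{\mathrm{L}^2}\lVert\nabla v\rVert_{\mathrm{L}^2}$; a single discrete Cauchy--Schwarz on the two-component vectors $(\sqrt{\tilde c_+}\lVert u\rVert_{\mathrm{L}^2},\sqrt{\nu_+}\lVert\nabla u\rVert_{\mathrm{L}^2})$ and its $v$-analogue, followed by the crude bound $\max\{\tilde c_+/\tilde c_-,\nu_+/\nu_-\}\le(\tilde c_+/\tilde c_-)(\nu_+/\nu_-)$ (valid since each ratio is $\ge1$), produces the first summand $\frac{\tilde c_+}{\tilde c_-}\frac{\nu_+}{\nu_-}$. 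For the convection terms $\frac12\int_\Omega(\mathbf{a}\cdot\nabla u\,v-u\,\mathbf{a}\cdot\nabla v)$ the naive term-by-term bound gives only the coefficient $1$; to obtain the sharp $\frac12$ I would keep the two terms together, write the estimate as $\frac12\lVert\mathbf{a}\rVert_{\mathrm{L}^\infty}\nu_-^{-1/2}\tilde c_-^{-1/2}(X_uY_v+Y_uX_v)$ with $X_w=\sqrt{\nu_-}\lVert\nabla w\rVert_{\mathrm{L}^2}$ and $Y_w=\sqrt{\tilde c_-}\lVert w\rVert_{\mathrm{L}^2}$, and then invoke $X_uY_v+Y_uX_v\le\sqrt{X_u^2+Y_u^2}\,\sqrt{X_v^2+Y_v^2}\le\lVert u\rVert_{1,c}\lVert v\rVert_{1,c}$, the last step because $X_w^2+Y_w^2\le\lVert w\rVert_{1,c}^2$. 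This yields the second summand.

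The boundary term is the delicate one and is where the multiplicative trace inequality enters. I would estimate $\int_{\Gamma_R}\alpha uv\le\lVert\alpha\rVert_{\mathrm{L}^\infty(\Omega)}\lVert u\rVert_{\mathrm{L}^2(\partial\Omega)}\lVert v\rVert_{\mathrm{L}^2(\partial\Omega)}$ (using $\Gamma_R\subset\partial\Omega$) and apply Lemma~\ref{lem:MTI} to each boundary factor. In the resulting bound the mixed term $\lVert u\rVert_{\mathrm{L}^2}\lVert\nabla u\rVert_{\mathrm{L}^2}$ is handled by Young's inequality in the weighted form $\lVert u\rVert_{\mathrm{L}^2}\lVert\nabla u\rVert_{\mathrm{L}^2}\le\frac{1}{2\sqrt{\tilde c_-\nu_-}}(\tilde c_-\lVert u\rVert_{\mathrm{L}^2}^2+\nu_-\lVert\nabla u\rVert_{\mathrm{L}^2}^2)\le\frac{1}{2\sqrt{\tilde c_-\nu_-}}\lVert u\rVert_{1,c}^2$ (this is the source of the factor $\frac12$ in front of $\nu_-^{-1/2}$), while the term $\lVert u\rVert_{\mathrm{L}^2}^2/\mathrm{diam}(\Omega)$ is bounded using $\mathrm{diam}(\Omega)\sim L$ from Definition~\ref{def:lengthscale} and the reaction comparison, giving the $\frac{1}{L\tilde c_-}$ contribution. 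Collecting these into $\lVert u\rVert_{\mathrm{L}^2(\partial\Omega)}^2\le C_\textup{tr}K\lVert u\rVert_{1,c}^2$ with $K=\frac{1}{L\tilde c_-}+\frac{1}{2\sqrt{\tilde c_-\nu_-}}$, taking square roots and multiplying the $u$- and $v$-factors gives $\int_{\Gamma_R}\alpha uv\le\lVert\alpha\rVert_{\mathrm{L}^\infty}C_\textup{tr}K\,\lVert u\rVert_{1,c}\lVert v\rVert_{1,c}$, which is exactly the third summand after factoring $\tilde c_-^{-1/2}$ out of $K$.

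Summing the three constants then yields $C_\textup{cont}$. For the local statement the same chain applies on $\Omega_j$ with $\tilde c_{\pm,j},\nu_{\pm,j}$, the $\mathrm{L}^\infty$ norms taken over $\Omega_j$, and $\mathrm{diam}(\Omega_j)\sim H_\textup{sub}$; the only place requiring the standing hypotheses is the replacement of $C_\textup{tr}(\Omega_j)$ by a single uniform $C_\textup{tr}$, which is guaranteed by the assumed uniform multiplicative trace estimate. I expect the main obstacle to be precisely this boundary term: securing the correct $\mathrm{diam}$ (respectively $H_\textup{sub}$) scaling, the uniform-in-$j$ trace constant, and the sharp $\frac12$ factors from Young's inequality rather than a crude $1$. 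By contrast, the volume terms are routine once the combined Cauchy--Schwarz is used to recover the sharp coefficients.
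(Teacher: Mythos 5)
Your proposal is correct and follows essentially the same route as the paper: the same three-way splitting of $a(u,v)$, the same weighted two-component Cauchy--Schwarz to get the sharp constants on the reaction--diffusion and convection blocks (including keeping the two convection terms paired to recover the factor $\tfrac12$), and the same use of the multiplicative trace inequality combined with weighted Young's inequality for the Robin boundary term, with the local case obtained verbatim on $\Omega_j$ with $L=H_\textup{sub}$ and the uniform trace constant. The only cosmetic difference is that you weight the first Cauchy--Schwarz by $\tilde c_+,\nu_+$ and then pass via $\max\{\tilde c_+/\tilde c_-,\nu_+/\nu_-\}\le(\tilde c_+/\tilde c_-)(\nu_+/\nu_-)$, whereas the paper distributes the ratios inside the vectors directly; both yield the same constant.
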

\begin{proof}
By Cauchy-Schwarz inequality
\[
\begin{split}
a(u,v) 
& \le \tilde{c}_+  \lVert u \rVert_{\mathrm{L}^2(\Omega)} \lVert v \rVert_{\mathrm{L}^2(\Omega)} 
+ \nu_+  \lVert \nabla u \rVert_{\mathrm{L}^2(\Omega)} \lVert \nabla v \rVert_{\mathrm{L}^2(\Omega)} \\ 
&+ \frac{1}{2}  \lVert \mathbf{a} \rVert_{\mathrm{L}^\infty(\Omega)}  \left( \lVert \nabla u \rVert_{\mathrm{L}^2(\Omega)} \lVert v \rVert_{\mathrm{L}^2(\Omega)} + \lVert u \rVert_{\mathrm{L}^2(\Omega)} \lVert \nabla v \rVert_{\mathrm{L}^2(\Omega)}  \right) \\
& + \lVert \alpha \rVert_{\mathrm{L}^\infty(\Omega)} \lVert u \rVert_{\mathrm{L}^2(\Gamma_R)} \lVert v \rVert_{\mathrm{L}^2(\Gamma_R)}. 
\end{split}
\]
First, using the Cauchy-Schwarz inequality with respect to the Euclidean inner product in $\mathbb{R}^2$ and $1 \le ({\tilde{c}_+}/{\tilde{c}_-})$, $1 \le ({\nu_+}/{\nu_-})$, we get 
\[
\begin{split}
&\tilde{c}_+  \lVert u \rVert_{\mathrm{L}^2(\Omega)} \lVert v \rVert_{\mathrm{L}^2(\Omega)} 
+ \nu_+  \lVert \nabla u \rVert_{\mathrm{L}^2(\Omega)} \lVert \nabla v \rVert_{\mathrm{L}^2(\Omega)} \\
&= 
\begin{pmatrix} \frac{\tilde{c}_+}{\tilde{c}_-} \sqrt{\tilde{c}_-} \lVert u \rVert_{\mathrm{L}^2(\Omega)} & \frac{\nu_+}{\nu_-} \sqrt{\nu_-} \lVert \nabla u \rVert_{\mathrm{L}^2(\Omega)} \end{pmatrix}
\begin{pmatrix} \sqrt{\tilde{c}_-} \lVert v \rVert_{\mathrm{L}^2(\Omega)} \\ \sqrt{\nu_-} \lVert \nabla v \rVert_{\mathrm{L}^2(\Omega)} \end{pmatrix} \\
& \le  \frac{\tilde{c}_+}{\tilde{c}_-} \frac{\nu_+}{\nu_-} \left( \tilde{c}_{-} \lVert u \rVert_{\mathrm{L}^2(\Omega)}^2 + \nu_- \lVert \nabla u \rVert_{\mathrm{L}^2(\Omega)}^2 \right)^{1/2} 
\left( \tilde{c}_{-} \lVert v \rVert_{\mathrm{L}^2(\Omega)}^2 + \nu_- \lVert \nabla v \rVert_{\mathrm{L}^2(\Omega)}^2 \right)^{1/2} \\
& \le \frac{\tilde{c}_+}{\tilde{c}_-} \frac{\nu_+}{\nu_-} \lVert u \rVert_{1,c} \lVert v \rVert_{1,c}.
\end{split}
\]
Second
\[
\begin{split}
& \lVert \nabla u \rVert_{\mathrm{L}^2(\Omega)} \lVert v \rVert_{\mathrm{L}^2(\Omega)} + \lVert u \rVert_{\mathrm{L}^2(\Omega)} \lVert \nabla v \rVert_{\mathrm{L}^2(\Omega)} \\
& = \frac{1}{\sqrt{\nu_- \tilde{c}_-}}
\begin{pmatrix} \sqrt{\nu_-} \lVert \nabla u \rVert_{\mathrm{L}^2(\Omega)} & \sqrt{\tilde{c}_-} \lVert u \rVert_{\mathrm{L}^2(\Omega)}\end{pmatrix}
\begin{pmatrix} \sqrt{\tilde{c}_-} \lVert v \rVert_{\mathrm{L}^2(\Omega)} \\ \sqrt{\nu_-} \lVert \nabla v \rVert_{\mathrm{L}^2(\Omega)} \end{pmatrix} \\
&\le \frac{1}{\sqrt{\nu_- \tilde{c}_-}} 
\left( \tilde{c}_{-} \lVert u \rVert_{\mathrm{L}^2(\Omega)}^2 + \nu_- \lVert \nabla u \rVert_{\mathrm{L}^2(\Omega)}^2 \right)^{1/2} 
\left( \tilde{c}_{-} \lVert v \rVert_{\mathrm{L}^2(\Omega)}^2 + \nu_- \lVert \nabla v \rVert_{\mathrm{L}^2(\Omega)}^2 \right)^{1/2} \\
&\le \frac{1}{\sqrt{\nu_- \tilde{c}_-}} \lVert u \rVert_{1,c} \lVert v \rVert_{1,c}.
\end{split}
\]
Third, for the boundary term, using the multiplicative trace inequality recalled in Lemma~\ref{lem:MTI} and using also the inequality $ab \le (a^2+b^2)/2$ valid for all $a,b>0$, we have
\[
\begin{split}
&\lVert u \rVert_{\mathrm{L}^2(\Gamma_R)} \\
&\le \sqrt{C_\textup{tr}} \frac{1}{\sqrt[4]{\tilde{c}_-}} 
\left( \frac{1}{L \sqrt{\tilde{c}_-}} {\tilde{c}_-} \lVert u \rVert_{\mathrm{L}^2(\Omega)}^2 + \frac{1}{\sqrt{\nu_-}}\sqrt{\nu_-}\lVert \nabla u \rVert_{\mathrm{L}^2(\Omega)} \sqrt{\tilde{c}_-} \lVert u \rVert_{\mathrm{L}^2(\Omega)} \right)^{1/2} \\
& \le \sqrt{C_\textup{tr}} \frac{1}{\sqrt[4]{\tilde{c}_-}} 
\left( \frac{1}{L \sqrt{\tilde{c}_-}} \lVert u \rVert_{1,c}^2 + \frac{1}{2\sqrt{\nu_-}}  \lVert u \rVert_{1,c}^2 \right)^{1/2} \\
&= \sqrt{C_\textup{tr}} \frac{1}{\sqrt[4]{\tilde{c}_-}} 
\left( \frac{1}{L \sqrt{\tilde{c}_-}} + \frac{1}{2\sqrt{\nu_-}} \right)^{1/2}  \lVert u \rVert_{1,c}
\end{split}
\]
and
\[
\lVert \alpha \rVert_{\mathrm{L}^\infty(\Omega)} \lVert u \rVert_{\mathrm{L}^2(\Gamma_R)} \lVert v \rVert_{\mathrm{L}^2(\Gamma_R)}  
\le \lVert \alpha \rVert_{\mathrm{L}^\infty(\Omega)} {C_\textup{tr}} \frac{1}{\sqrt{\tilde{c}_-}} 
\biggl( \frac{1}{L \sqrt{\tilde{c}_-}} + \frac{1}{2\sqrt{\nu_-}} \biggr)  \lVert u \rVert_{1,c} \lVert v \rVert_{1,c}.
\]
In conclusion
\[
a(u,v) \le 
\biggl ( \frac{\tilde{c}_+}{\tilde{c}_-} \frac{\nu_+}{\nu_-} +
\frac{1}{2}  \frac{\lVert \mathbf{a} \rVert_{\mathrm{L}^\infty(\Omega)}}{\sqrt{\nu_- \tilde{c}_-}} +
\frac{\lVert \alpha \rVert_{\mathrm{L}^\infty(\Omega)} C_\textup{tr}}{\sqrt{\tilde{c}_-}} 
\biggl( \frac{1}{L \sqrt{\tilde{c}_-}} + \frac{1}{2\sqrt{\nu_-}} \biggr)  \biggr )
\lVert u \rVert_{1,c} \lVert v \rVert_{1,c}.
\]

Finally, note that the local bilinear form $a_j$ has the same form as the bilinear form $a$, so the analogous inequality holds (with $L=H_\textup{sub}$). 
\end{proof}

\begin{lemma}[Coercivity of the bilinear forms $a$ and $a_j$]
\label{lem:RCDcoerc}
We have
\begin{gather}
a(v,v) \ge  \lVert v \rVert_{1,c}^2 \quad \text{for all } v \in \mathrm{H}^1(\Omega), \label{eq:RCDcoercGl}\\
a_j(v,v) \ge  \lVert v \rVert_{1,c,\Omega_j}^2 \quad \text{for all } v \in \mathrm{H}^1(\Omega_j) \label{eq:RCDcoercLoc}.
\end{gather}
\end{lemma}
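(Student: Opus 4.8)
The plan is to evaluate the bilinear form directly on the diagonal, taking $u=v$ in the definition of $a$, and to exploit the skew-symmetry of the first-order (convection) part. Writing out $a(v,v)$, the two convection contributions are $\tfrac12\,\mathbf{a}\cdot\nabla v\,v$ and $-\tfrac12\,v\,\mathbf{a}\cdot\nabla v$; since in this section all coefficients and all functions are real-valued, these two integrands are pointwise opposite and cancel exactly. This cancellation is in fact the whole reason the equation in conservative form was rewritten as in the derivation of the variational formulation (via $\dive(\mathbf{a}u)=\dive(\mathbf{a})u+\mathbf{a}\cdot\nabla u$ followed by integration by parts): the symmetric-plus-skew splitting was engineered precisely so that the self-adjoint part of $a$ is $(\cdot,\cdot)_{1,c}$ together with a boundary term.

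After the cancellation, what survives is
\[
a(v,v)=\int_\Omega\bigl(\tilde{c}\,v^2+\nu\,|\nabla v|^2\bigr)+\int_{\Gamma_R}\alpha\,v^2
=\lVert v\rVert_{1,c}^2+\int_{\Gamma_R}\alpha\,v^2 .
\]
The final step is then to discard the boundary integral using the standing sign hypothesis $\alpha(\mathbf{x})\ge 0$ a.e., which makes $\int_{\Gamma_R}\alpha\,v^2\ge 0$; this immediately gives \eqref{eq:RCDcoercGl} with coercivity constant equal to $1$. Note that, in contrast to the continuity estimate of Lemma~\ref{lem:RCDcont}, no multiplicative trace inequality and no lower bounds $\tilde{c}_-,\nu_-$ enter here, because the reaction and diffusion terms reproduce $\lVert v\rVert_{1,c}^2$ identically rather than merely up to a constant.

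For the local bound \eqref{eq:RCDcoercLoc} I would repeat the identical computation on $\Omega_j$, with the Robin boundary $\Gamma_R$ replaced by $\partial\Omega_j\setminus\Gamma_D$: the convection terms cancel as before, the reaction and diffusion parts give exactly $\lVert v\rVert_{1,c,\Omega_j}^2$, and the surviving boundary integral $\int_{\partial\Omega_j\setminus\Gamma_D}\alpha\,v^2$ is again non-negative. I do not expect any genuine obstacle; the only subtle point is that the argument is specific to the real-valued setting, since for a truly complex sesquilinear form the two convection terms would combine into a purely imaginary quantity instead of cancelling, and one could then only bound the real part (or modulus) of $a(v,v)$ rather than obtain exact coercivity with constant one.
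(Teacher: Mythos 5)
Your proof is correct and follows exactly the paper's argument: the anti-symmetric convection terms cancel on the diagonal, leaving $\lVert v\rVert_{1,c}^2$ plus a boundary integral that is non-negative by the assumption $\alpha\ge 0$, for both the global and local forms. Your added observations (why the derivation of the variational form was engineered this way, and that the cancellation is specific to the real-valued setting) are consistent with remarks the paper itself makes after the lemma.
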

\begin{proof}
Note that
\[
a(v,v) = \int_\Omega \Bigl ( \tilde{c} v^2 + \nu |\nabla v|^2 \Bigr ) + \int_{\Gamma_R} \alpha v^2,
\]
and 
\[
a_j(v,v) =  \int_{\Omega_j} \Bigl ( \tilde{c} v^2 + \nu |\nabla v|^2  \Bigr ) + \int_{\partial \Omega_j \setminus \Gamma_D} \alpha v^2,
\]
because the anti-symmetric terms cancel out. 
Thus properties \eqref{eq:RCDcoercGl}-\eqref{eq:RCDcoercLoc} follow.
\end{proof}
\noindent
Note that the good constant in the coercivity estimates is a result of careful choices made in the derivation of the bilinear forms (see the beginning of section \ref{sec:RCDeq}), such as the handling of the $\dive(\mathbf{a}u)v$ term (split into two parts with different treatments) and the definition of suitable Robin-type boundary conditions.

\subsection{Estimates for the assumptions of Theorem \ref{thm:main}}
Now we prove, for the heterogeneous reaction-convection-diffusion problem \eqref{eq:RCDbvp}, the equalities and inequalities that have been identified in Theorem \ref{thm:main} as the assumptions for the convergence analysis. 
In the proofs we do not make any assumption on the regime of the physical coefficients of the equation nor of the numerical parameters. 
{Note that to prove the stability assumption \eqref{eq:stability} for this problem we have considered the case of a coercive bilinear form (which is non-symmetric, though), see Lemma~\ref{lem:RCDstab}. However, in general the problem does not need to be positive definite for   Theorem \ref{thm:main} to be valid.}  

In what follows, we prove equalities and estimates in the continuous setting, which can be translated into results in the discrete setting recalling relations \eqref{eq:relsBilForms} between the continuous and discrete bilinear forms, relations \eqref{eq:relsNorms} between the continuous and discrete inner products (hence between the norms), and the fact that the vector of degrees of freedom of $\Pi^h(\chi_j v_h)$ is $D_j R_j  \mathbf{V}$. 

First of all, note that the partition of unity, the global and local bilinear forms and norms fit the typical framework identified in §\ref{subsec:commentsHp}: {the entries corresponding to $\partial \Omega_j \setminus \partial \Omega$ of the partition of unity matrix $D_j$ are zero; the local bilinear form is like the global bilinear form but with the integrals on $\Omega_j$ instead of $\Omega$ and with an additional boundary integral on $\partial \Omega_j \setminus \partial \Omega$; the local norm can be obtained from the global norm just by replacing $\Omega$ with $\Omega_j$ in the integration domain.  
Therefore it is not surprising that assumptions  \eqref{eq:DAB}, \eqref{eq:FkD},  \eqref{eq:converseStSp}, \eqref{eq:finOvr} are verified. As a more precise illustration of the general remarks in §\ref{subsec:commentsHp}, we first provide the detailed proof of assumptions \eqref{eq:DAB} and \eqref{eq:FkD}, which is essentially based on Remark~\ref{rem:globloc}:}

\begin{lemma}
\label{lem:hypglobloc}
For all global vectors of degrees of freedom $\mathbf{U} \in \mathbb{R}^n$ and local vectors of degrees of freedom $\mathbf{V}^j \in \mathbb{R}^{n_j}$ in $\Omega_j$, $j=1,\dots,N$, we have
\begin{align*}
(D_j R_j A \mathbf{U}, \mathbf{V}^j) &= (D_j B_j R_j \mathbf{U}, \mathbf{V}^j),\\
(D_j R_j F_\Omega \mathbf{U}, \mathbf{V}^j) &= (D_j F_{\Omega_j} R_j \mathbf{U}, \mathbf{V}^j).
\end{align*}
\end{lemma}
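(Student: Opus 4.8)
The plan is to verify both identities by translating them to the continuous finite element level and invoking Remark~\ref{rem:globloc}, which is the only genuinely analytic ingredient. I treat the two identities in parallel, since the argument for $F_\Omega, F_{\Omega_j}$ is word-for-word the one for $A, B_j$ with $a, a_j$ replaced by $(\cdot,\cdot)_{1,c}, (\cdot,\cdot)_{1,c,\Omega_j}$. Fix $\mathbf{U} \in \mathbb{R}^n$ and $\mathbf{W}^j \in \mathbb{R}^{n_j}$, and let $u_h \in \mathcal{V}^h$, $w_h^j \in \mathcal{V}_j^h$ be the associated finite element functions. Since $D_j$ is real and diagonal, hence symmetric, and $R_j$ satisfies $(R_j \mathbf{V}, \mathbf{W}^j) = (\mathbf{V}, R_j^T \mathbf{W}^j)$, I would first rewrite
\[
(D_j R_j A \mathbf{U}, \mathbf{W}^j) = (R_j A \mathbf{U}, D_j \mathbf{W}^j) = (A \mathbf{U}, R_j^T D_j \mathbf{W}^j),
\]
so that the task reduces to identifying the global vector $R_j^T D_j \mathbf{W}^j$ with a suitable finite element function.

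The key observation is that $D_j \mathbf{W}^j$ is precisely the local vector of degrees of freedom of $\Pi^h(\chi_j w_h^j)$: indeed $D_j$ is diagonal with entry $\chi_j(\mathbf{x}_i)$ at each node $\mathbf{x}_i$, and nodal interpolation gives $\Pi^h(\chi_j w_h^j)(\mathbf{x}_i) = \chi_j(\mathbf{x}_i)\, w_h^j(\mathbf{x}_i)$, which is exactly the rule that makes $D_j R_j \mathbf{V}$ the degrees of freedom of $\Pi^h(\chi_j v_h)$ for a global $v_h$. Because $\chi_j$ vanishes on $\partial\Omega_j \setminus \partial\Omega$, so does $z_h^j \coloneqq \Pi^h(\chi_j w_h^j) \in \mathcal{V}_j^h$; hence its extension by zero $\tilde{z}_h$ to all of $\Omega$ belongs to $\mathcal{V}^h$, is supported in $\overline{\Omega}_j$, and carries the global vector of degrees of freedom $R_j^T D_j \mathbf{W}^j$.

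With this identification, and using \eqref{eq:relsBilForms}, the right-hand side above becomes $(A\mathbf{U}, R_j^T D_j \mathbf{W}^j) = a(u_h, \tilde{z}_h)$. Since $\tilde{z}_h$ is supported in $\overline{\Omega}_j$, Remark~\ref{rem:globloc} yields $a(u_h, \tilde{z}_h) = a_j(u_h|_{\Omega_j}, z_h^j)$; translating back through \eqref{eq:relsBilForms} and recalling that $u_h|_{\Omega_j}$ has local degrees of freedom $R_j \mathbf{U}$ while $z_h^j$ has $D_j \mathbf{W}^j$, this equals $(B_j R_j \mathbf{U}, D_j \mathbf{W}^j) = (D_j B_j R_j \mathbf{U}, \mathbf{W}^j)$, which is the first identity. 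Running the identical chain with $(\cdot,\cdot)_{1,c}$ in place of $a$, using \eqref{eq:relsNorms} and the second equality of Remark~\ref{rem:globloc}, gives the second identity.

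I expect the only delicate point to be the extension-by-zero step of the middle paragraph: one must be sure that multiplying by $\chi_j$ and then interpolating produces a discrete function that genuinely vanishes at every node of $\partial\Omega_j \setminus \partial\Omega$, so that $\tilde{z}_h$ is conforming (lies in $\mathcal{V}^h$) and its degrees of freedom are obtained from those of $z_h^j$ simply by padding with zeros, i.e.\ by applying $R_j^T$. This is guaranteed by the assumption $\mathrm{supp}(\chi_j) \subset \Omega_j$ together with the nodal nature of $\Pi^h$; everything else is the bookkeeping of moving the symmetric matrix $D_j$ and the transpose $R_j^T$ through the inner products.
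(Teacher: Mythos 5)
Your proof is correct and follows essentially the same route as the paper's: both identify $R_j^T D_j \mathbf{V}^j$ as the global vector of degrees of freedom of an interpolant $\Pi^h(\chi_j \,\cdot\,)$ that vanishes on $\partial\Omega_j\setminus\partial\Omega$, then invoke Remark~\ref{rem:globloc} together with \eqref{eq:relsBilForms}--\eqref{eq:relsNorms} to pass between $a$ and $a_j$ (respectively $(\cdot,\cdot)_{1,c}$ and $(\cdot,\cdot)_{1,c,\Omega_j}$). The only difference is bookkeeping order: the paper first extends the local vector by zero via $R_j^T$ (using $R_jR_j^T=I$) and then multiplies by $\chi_j$ and interpolates, whereas you interpolate $\chi_j w_h^j$ locally and then extend by zero; these yield the same function since $\mathrm{supp}(\chi_j)\subset\Omega_j$.
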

\begin{proof}
Since the partition of unity matrices $D_j$ are diagonal, hence symmetric, and the restriction matrices $R_j$ satisfy $(\mathbf{V},R_j^T \mathbf{W}^j) = (R_j \mathbf{V}, \mathbf{W}^j)$ and $R_j R_j^T \mathbf{V}^j = \mathbf{V}^j$, we can write
\[
(D_j R_j A \mathbf{U}, \mathbf{V}^j) = (A \mathbf{U}, R_j^T D_j \mathbf{V}^j) = (A \mathbf{U}, R_j^T D_j R_j R_j^T \mathbf{V}^j). 
\] 
Now, call $\widetilde{\mathbf{V}}^j \coloneqq R_j^T \mathbf{V}^j$ and $\widetilde{v}_j \in \mathcal{V}^h$ the function with degrees of freedom given by $\widetilde{\mathbf{V}}^j$, so $D_j R_j R_j^T \mathbf{V}^j$ is the local vector of degrees of freedom of $\Pi^h(\chi_j \widetilde{v}_j)$, and $R_j^T D_j R_j R_j^T \mathbf{V}^j$ is the global vector of degrees of freedom of $\Pi^h(\chi_j \widetilde{v}_j)$. Call $u \in \mathcal{V}^h$ the function with degrees of freedom given by $\mathbf{U}$. Therefore
\[
(A \mathbf{U}, R_j^T D_j R_j R_j^T \mathbf{V}^j) = a(u,\Pi^h(\chi_j \widetilde{v}_j)). 
\]
Moreover, observe that $\chi_j \widetilde{v}_j$ is supported in $\Omega_j$ and vanishes on $\partial \Omega_j \setminus \partial \Omega$, thus the same is true for its interpolant $\Pi^h(\chi_j \widetilde{v}_j)$, and by applying Remark~\ref{rem:globloc} we obtain 
\[
a(u,\Pi^h(\chi_j \widetilde{v}_j)) = a_j(u,\Pi^h(\chi_j \widetilde{v}_j)).
\]
Finally
\[
a_j(u,\Pi^h(\chi_j \widetilde{v}_j)) =  (B_j R_j \mathbf{U}, D_j R_j R_j^T \mathbf{V}^j) = (B_j R_j \mathbf{U}, D_j \mathbf{V}^j) = (D_j B_j R_j \mathbf{U}, \mathbf{V}^j).
\]
The proof of $(D_j R_j F_\Omega \mathbf{U}, \mathbf{V}^j) = (D_j F_{\Omega_j} R_j \mathbf{U}, \mathbf{V}^j)$ proceeds in the same way.  
\end{proof}

{
Now we prove that assumptions \eqref{eq:converseStSp} and \eqref{eq:finOvr} are indeed verified with the geometric constants $\Lambda_0$ and $\Lambda_1$ defined in §\ref{subsec:commentsHp}:
\begin{lemma}[Continuous version of assumption \eqref{eq:converseStSp}]
\label{lem:RCDconverseStSp}
For all $w^j \in \mathrm{H}^1(\Omega_j)$, $j=1,\dots,N$, denoting by $\widetilde{w}^j$ their extensions by zero to $\Omega$,  we have
\begin{equation*}
\biggl \lVert  \sum_{j=1}^N \widetilde{w}^j \biggr \rVert_{1,c}^2 \le 
\Lambda_0 \sum_{j=1}^N \lVert w^j \rVert_{1,c,\Omega_j}^2,
\end{equation*}
where $\Lambda_0$ is the maximum number of neighboring  subdomains:
\begin{equation*}
\Lambda_0 = \max_{j=1,\dots,N} \, \# \Lambda(j), \quad \text{where } \Lambda(j) \coloneqq \set{i | \Omega_j \cap \Omega_{i} \ne \emptyset}.  
\end{equation*}
\end{lemma}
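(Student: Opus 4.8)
The plan is to expand the squared weighted norm of the sum into a double sum of cross inner products and then exploit the locality of the supports. Writing
\[
\Bigl\lVert \sum_{j=1}^N \widetilde{w}^j \Bigr\rVert_{1,c}^2 = \sum_{j=1}^N \sum_{i=1}^N (\widetilde{w}^j, \widetilde{w}^i)_{1,c},
\]
the crucial observation is that each integrand $\tilde{c}\,\widetilde{w}^j \widetilde{w}^i + \nu\, \nabla \widetilde{w}^j \cdot \nabla \widetilde{w}^i$ is supported in $\overline{\Omega}_j \cap \overline{\Omega}_i$, since $\widetilde{w}^j$ and its gradient vanish outside $\Omega_j$. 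Hence $(\widetilde{w}^j, \widetilde{w}^i)_{1,c}$ can be nonzero only when $\Omega_j \cap \Omega_i \neq \emptyset$, that is only for $i \in \Lambda(j)$, and the full sum collapses to $\sum_{j=1}^N \sum_{i \in \Lambda(j)} (\widetilde{w}^j, \widetilde{w}^i)_{1,c}$.

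Next I would bound each surviving cross term by the Cauchy--Schwarz inequality for $(\cdot,\cdot)_{1,c}$ followed by Young's inequality,
\[
\lvert (\widetilde{w}^j, \widetilde{w}^i)_{1,c} \rvert \le \lVert \widetilde{w}^j \rVert_{1,c} \lVert \widetilde{w}^i \rVert_{1,c} \le \tfrac{1}{2}\bigl( \lVert \widetilde{w}^j \rVert_{1,c}^2 + \lVert \widetilde{w}^i \rVert_{1,c}^2 \bigr),
\]
and then use that extension by zero preserves the norm, $\lVert \widetilde{w}^j \rVert_{1,c} = \lVert w^j \rVert_{1,c,\Omega_j}$, because the integrals over $\Omega$ collapse to integrals over $\Omega_j$. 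It then remains to count multiplicities. Since the neighbour relation is symmetric, $i \in \Lambda(j) \iff j \in \Lambda(i)$, both halves of the Young splitting contribute the same quantity $\sum_{j=1}^N \#\Lambda(j)\, \lVert w^j \rVert_{1,c,\Omega_j}^2$, so the factor $\tfrac{1}{2}$ cancels and we obtain
\[
\Bigl\lVert \sum_{j=1}^N \widetilde{w}^j \Bigr\rVert_{1,c}^2 \le \sum_{j=1}^N \#\Lambda(j)\, \lVert w^j \rVert_{1,c,\Omega_j}^2 \le \Lambda_0 \sum_{j=1}^N \lVert w^j \rVert_{1,c,\Omega_j}^2,
\]
which is the claimed inequality.

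The argument is elementary, and the step deserving the most care is the combinatorial bookkeeping: it is precisely the symmetrization by Young's inequality, combined with the symmetry of the neighbour relation, that makes each term $\lVert w^j \rVert_{1,c,\Omega_j}^2$ appear exactly $\#\Lambda(j)$ times and thus yields the clean constant $\Lambda_0 = \max_j \#\Lambda(j)$ rather than a doubled or squared factor. I would also record that the gradient of $\widetilde{w}^j$ is taken as the extension by zero of $\nabla w^j$, which is what legitimizes both the support observation and the norm identity; this is the same mechanism as in the proof of \cite[Lemma 3.6]{GrSpZo:impedance} (compare \cite[eq.~(4.8)]{GrSpZo:Helm:2017}). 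Finally, the discrete assumption \eqref{eq:converseStSp} follows from this continuous estimate through relations \eqref{eq:relsNorms}.
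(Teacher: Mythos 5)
Your proof is correct, and its skeleton (expand the square into a double sum, use the support/locality observation to restrict to $i \in \Lambda(j)$, apply Cauchy--Schwarz for $(\cdot,\cdot)_{1,c}$ to each cross term, invoke the norm identity $\lVert \widetilde{w}^j \rVert_{1,c} = \lVert w^j \rVert_{1,c,\Omega_j}$ via the zero extension) matches the paper's proof, which relies on Remark~\ref{rem:globloc} for the same norm identity. Where you diverge is the final combinatorial step: the paper finishes with \emph{two} applications of the Cauchy--Schwarz inequality for the Euclidean dot product, first over the outer index (producing a factor $\bigl(\sum_j \lVert w^j \rVert_{1,c,\Omega_j}^2\bigr)^{1/2}$ times a more complicated second factor) and then over the inner index (producing $\#\Lambda(j)$), which leads through an intermediate bound with $\Lambda_0^2$ under a square root before collapsing to $\Lambda_0$. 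You instead use Young's inequality $ab \le \tfrac12(a^2+b^2)$ on each cross term and then exploit the symmetry $i \in \Lambda(j) \iff j \in \Lambda(i)$ to count multiplicities, arriving directly at the sharper intermediate estimate $\sum_{j=1}^N \#\Lambda(j)\, \lVert w^j \rVert_{1,c,\Omega_j}^2$ before relaxing to $\Lambda_0 \sum_j \lVert w^j \rVert_{1,c,\Omega_j}^2$. Both routes are elementary and yield the same constant; yours is arguably cleaner bookkeeping (no nested square roots, and it records the per-subdomain weight $\#\Lambda(j)$ explicitly, which could matter if the decomposition is very nonuniform), while the paper's double Cauchy--Schwarz is the more mechanical pattern that generalizes verbatim from \cite[Lemma 3.6]{GrSpZo:impedance}. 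One point you handled well and should keep explicit: interpreting $\nabla \widetilde{w}^j$ as the zero extension of $\nabla w^j$ is what makes both the support argument and the norm identity legitimate, since for a general $w^j \in \mathrm{H}^1(\Omega_j)$ the zero extension need not lie in $\mathrm{H}^1(\Omega)$ unless $w^j$ vanishes on $\partial\Omega_j \setminus \partial\Omega$, as the functions $D_j R_j \mathbf{V}$ arising in the application do.
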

\begin{proof}
By applying several times the Cauchy-Schwarz inequality (first for the scalar product $(\;,\;)_{1,c}$ and then twice for the dot product of the Euclidean space), and by Remark~\ref{rem:globloc}, we get 
\[
\begin{split}
&\biggl \lVert  \sum_{j=1}^N \widetilde{w}^j \biggr \rVert_{1,c}^2 
= \Biggl( \sum_{j=1}^N \widetilde{w}^j, \sum_{j'=1}^N \widetilde{w}^{j'} \Biggr)_{1,c}
= \sum_{j=1}^N \sum_{j'\in\Lambda(j)} (\widetilde{w}^{j}, \widetilde{w}^{j'})_{1,c} \\
&\le \sum_{j=1}^N \Biggl( \lVert w^j \rVert_{1,c,\Omega_j} \sum_{j'\in\Lambda(j)}  \lVert w^{j'} \rVert_{1,c,\Omega_{j'}} \Biggr)  \\
&\le \Biggl( \sum_{j=1}^N  \lVert w^j \rVert_{1,c,\Omega_j}^2 \Biggr)^{1/2} \Biggl( \sum_{j=1}^N \Bigl( \sum_{j'\in\Lambda(j)}  \lVert w^{j'} \rVert_{1,c,\Omega_{j'}} \Bigr)^2 \Biggl)^{1/2} \\
&\le \Biggl( \sum_{j=1}^N  \lVert w^j \rVert_{1,c,\Omega_j}^2 \Biggr)^{1/2} \Biggl( \sum_{j=1}^N \Bigl( \sum_{j'\in\Lambda(j)} 1^2 \sum_{j'\in\Lambda(j)}  \lVert w^{j'} \rVert_{1,c,\Omega_{j'}}^2 \Bigr) \Biggl)^{1/2}. 
\end{split}
\]
Now we have 
\[
\begin{split}
&\sum_{j=1}^N \Bigl( \sum_{j'\in\Lambda(j)} 1^2 \sum_{j'\in\Lambda(j)}  \lVert w^{j'} \rVert_{1,c,\Omega_{j'}}^2 \Bigr) 
= \sum_{j=1}^N \Bigl( \#\Lambda(j) \sum_{j'\in\Lambda(j)}  \lVert w^{j'} \rVert_{1,c,\Omega_{j'}}^2 \Bigr) \\
&\le \Lambda_0 \sum_{j=1}^N \sum_{j'\in\Lambda(j)}  \lVert w^{j'} \rVert_{1,c,\Omega_{j'}}^2 
= \Lambda_0 \sum_{j'=1}^N  \#\Lambda(j') \lVert w^{j'} \rVert_{1,c,\Omega_{j'}}^2 
\le \Lambda_0^2 \sum_{j'=1}^N \lVert w^{j'} \rVert_{1,c,\Omega_{j'}}^2.  
\end{split}
\]
Therefore in summary 
\[
\biggl \lVert  \sum_{j=1}^N \widetilde{w}^j \biggr \rVert_{1,c}^2 
\le \Biggl( \sum_{j=1}^N  \lVert w^j \rVert_{1,c,\Omega_j}^2 \Biggr)^{1/2}  \Biggl( \Lambda_0^2 \sum_{j'=1}^N \lVert w^{j'} \rVert_{1,c,\Omega_{j'}}^2 \Biggr)^{1/2} 
 = \Lambda_0 \sum_{j=1}^N \lVert w^j \rVert_{1,c,\Omega_j}^2.
\]
\end{proof}
}

{
\begin{lemma}[Continuous version of assumption \eqref{eq:finOvr}]
\label{lem:RCDfinOvr}
For all $v \in \mathrm{H}^1(\Omega)$
\begin{equation*}
 \sum_{j=1}^N \lVert v|_{\Omega_j} \rVert_{1,c,\Omega_j}^2 
 \le \Lambda_1 \Vert v \rVert_{1,c}^2,
\end{equation*}
where $\Lambda_1$ is the maximal multiplicity of the subdomain intersection:
\begin{equation*}
\Lambda_1 = \max \set{m | \exists \, j_1 \ne \dots \ne j_m \text{ such that } \textup{meas}(\Omega_{j_1} \cap \dots \cap \Omega_{j_m}) \ne 0}.
\end{equation*}
\end{lemma}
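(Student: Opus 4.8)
The plan is to exploit the non-negativity of the integrand defining the weighted norm and to count, pointwise, how many subdomains overlap at a given point. First I would write each local norm explicitly using the indicator function $\mathbf{1}_{\Omega_j}$ of the subdomain:
\[
\lVert v|_{\Omega_j} \rVert_{1,c,\Omega_j}^2 = \int_{\Omega_j} \bigl( \tilde{c}\, v^2 + \nu\, |\nabla v|^2 \bigr) = \int_\Omega \mathbf{1}_{\Omega_j}\, \bigl( \tilde{c}\, v^2 + \nu\, |\nabla v|^2 \bigr).
\]
Here I would record that the integrand $g \coloneqq \tilde{c}\, v^2 + \nu\, |\nabla v|^2$ is non-negative almost everywhere, thanks to assumption \eqref{eq:hypctilde} giving $\tilde{c} \ge \tilde{c}_- > 0$ and to $\nu \ge \nu_- > 0$; this sign information is what makes the whole estimate go through without any cancellation.

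Next I would sum over $j$ and interchange the finite sum with the integral (trivially valid since the sum is finite, and in any case justified by Tonelli's theorem because $g \ge 0$), obtaining
\[
\sum_{j=1}^N \lVert v|_{\Omega_j} \rVert_{1,c,\Omega_j}^2 = \int_\Omega \Bigl( \sum_{j=1}^N \mathbf{1}_{\Omega_j}(\mathbf{x}) \Bigr) g(\mathbf{x})\, d\mathbf{x}.
\]
The factor $\sum_{j=1}^N \mathbf{1}_{\Omega_j}(\mathbf{x})$ simply counts the number of subdomains containing $\mathbf{x}$. By the very definition of $\Lambda_1$ as the maximal multiplicity of the subdomain intersection, the set of points belonging to strictly more than $\Lambda_1$ of the $\Omega_j$ has Lebesgue measure zero, so $\sum_{j=1}^N \mathbf{1}_{\Omega_j}(\mathbf{x}) \le \Lambda_1$ for almost every $\mathbf{x} \in \Omega$.

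Finally, bounding this counting factor by $\Lambda_1$ under the integral and recognizing $\int_\Omega g = \lVert v \rVert_{1,c}^2$ yields the claimed inequality. The only point deserving a word of care is the almost-everywhere bound on the overlap count: one must observe that the definition of $\Lambda_1$ indeed controls the multiplicity almost everywhere, which is immediate, since a region where strictly more than $\Lambda_1$ subdomains meet would have positive measure and thus contradict the maximality used to define $\Lambda_1$. Beyond this, there is no genuine obstacle; the estimate is a direct consequence of the finite-overlap property of the decomposition.
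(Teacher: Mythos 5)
Your proof is correct and follows essentially the same route as the paper, which simply writes $\sum_{j=1}^N \int_{\Omega_j} (\tilde{c}\,v^2 + \nu |\nabla v|^2) \le \Lambda_1 \int_\Omega (\tilde{c}\,v^2 + \nu |\nabla v|^2)$ and calls the estimate immediate from the definition of $\Lambda_1$ and the norms. You have merely made explicit the pointwise multiplicity-counting via indicator functions and the non-negativity of the integrand that the paper leaves implicit.
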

\begin{proof}
The result is immediate by the definition of the norms and of $\Lambda_1$:
\[
 \sum_{j=1}^N \lVert v|_{\Omega_j} \rVert_{1,c,\Omega_j}^2  = 
\sum_{j=1}^N  \int_{\Omega_j} \Bigl ( \tilde{c} (v|_{\Omega_j})^2+ \nu \bigl \lvert \nabla (v|_{\Omega_j}) \bigr \rvert^2 \Bigr )
\le \Lambda_1 \int_{\Omega} \Bigl ( \tilde{c} v^2+ \nu \lvert \nabla v \rvert^2 \Bigr ).
\]
\end{proof}
}

For the remaining assumptions, for the translation from the continuous to the discrete setting we also need to consider the error in interpolation of $\chi_j v_h$, studied in the following lemma.

\begin{lemma}
\label{lem:errInterpChi}
For any $j=1,\dots,N$, let $v_h \in \mathcal{V}^h_j$. 
Then
\begin{equation}
\label{eq:RCDsim3-10bis}
\lVert (\mathrm{I} - \Pi^h)(\chi_j v_h) \rVert_{1,c, \Omega_j} \le C_{\textup{err},j}  \lVert v_h \rVert_{1,c, \Omega_j}, 
\end{equation} 
where
\begin{equation}
\label{eq:Cerrj}
C_{\textup{err},j} = C_\Pi \, c(r,d) \, C_\textup{dPU} \, \sqrt{C_\textup{inv}} \left (\sqrt{\frac{\nu_{+,j}}{\nu_{-,j}}} + \sqrt{\frac{\tilde{c}_{+,j}}{\nu_{-,j}}} h \right) \frac{h}{\delta},
\end{equation} 
and $C_\Pi$ appears in \eqref{eq:errEst}, $C_\textup{dPU}$ in \eqref{eq:2-21}, $C_\textup{inv}$ is a standard inverse inequality constant (see the proof for more details),
and $c(r,d) = \max_{\vert \gamma\vert = r}\sum_{\beta \,|\, 0 < \beta\le\gamma}\binom{\gamma}{\beta}$.
\end{lemma}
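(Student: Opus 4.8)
The plan is to reduce the whole estimate, elementwise, to the interpolation error bound \eqref{eq:errEst} applied to the product $\chi_j v_h$, and then to control the single quantity $\lvert \chi_j v_h\rvert_{\mathrm{H}^r(\tau)}$ on each simplex $\tau\subset\overline{\Omega}_j$. Writing $e\coloneqq(\mathrm{I}-\Pi^h)(\chi_j v_h)$, I would first bound the target norm from above by the plain Sobolev quantities, $\lVert e\rVert_{1,c,\Omega_j}^2\le \tilde c_{+,j}\lVert e\rVert_{\mathrm{L}^2(\Omega_j)}^2+\nu_{+,j}\lvert e\rvert_{\mathrm{H}^1(\Omega_j)}^2$, split into a sum over the elements $\tau$. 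On each $\tau$, estimate \eqref{eq:errEst} controls the two error pieces by $C_\Pi h^r\lvert \chi_j v_h\rvert_{\mathrm{H}^r(\tau)}$ and $C_\Pi h^{r-1}\lvert \chi_j v_h\rvert_{\mathrm{H}^r(\tau)}$ respectively; the single extra power of $h$ on the $\mathrm{L}^2$ side is exactly what produces the additional factor $h$ attached to $\sqrt{\tilde c_{+,j}/\nu_{-,j}}$ in $C_{\textup{err},j}$. So everything hinges on estimating $\lvert \chi_j v_h\rvert_{\mathrm{H}^r(\tau)}$.

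For that I would expand $\partial^\gamma(\chi_j v_h)$ with $\lvert\gamma\rvert=r$ by the Leibniz rule. The key structural fact is that $v_h|_\tau\in\mathbb{P}_{r-1}$, so $\partial^{\gamma-\beta}v_h=0$ whenever $\beta=0$; only the terms with $0<\beta\le\gamma$ survive, which is precisely the index set defining $c(r,d)=\max_{\lvert\gamma\rvert=r}\sum_{\beta\,\mid\,0<\beta\le\gamma}\binom{\gamma}{\beta}$. Each surviving term carries a factor $\partial^\beta\chi_j$, which assumption \eqref{eq:2-21} bounds by $C_{\textup{dPU}}\,\delta^{-\lvert\beta\rvert}$; summing the binomial coefficients then yields the factors $c(r,d)$ and $C_{\textup{dPU}}$ in the constant.

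Next I would convert each remaining factor $\partial^{\gamma-\beta}v_h$, of order $r-\lvert\beta\rvert\in\{0,\dots,r-1\}$, into the $\mathrm{H}^1$ seminorm of $v_h$ by a standard inverse inequality on the shape-regular mesh, which supplies $\sqrt{C_\textup{inv}}$. Using that the overlap contains at least one layer of elements, i.e.\ $h\le\delta$, the mixed powers $\delta^{-\lvert\beta\rvert}h^{-(r-\lvert\beta\rvert-1)}=\delta^{-1}h^{-(r-2)}(h/\delta)^{\lvert\beta\rvert-1}$ arising from \eqref{eq:2-21} combined with the inverse estimate are all dominated by the $\lvert\beta\rvert=1$ term, collapsing the net scale to $\delta^{-1}h^{-(r-2)}$; multiplying by the $h^r$ (resp.\ $h^{r-1}$) from \eqref{eq:errEst} gives the overall factor $h/\delta$. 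Reassembling, summing over $\tau$, bounding $\lvert v_h\rvert_{\mathrm{H}^1(\Omega_j)}\le\nu_{-,j}^{-1/2}\lVert v_h\rVert_{1,c,\Omega_j}$, and applying $\sqrt{a+b}\le\sqrt a+\sqrt b$ to separate the $\nu_{+,j}$ and $\tilde c_{+,j}$ contributions, reproduces $C_{\textup{err},j}$ exactly.

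The step I expect to be the main obstacle is this last reduction: showing that \emph{every} factor $\partial^{\gamma-\beta}v_h$ can be controlled, uniformly in the multi-index $\beta$ and in the admissible ratio $h/\delta$, by the gradient seminorm of $v_h$ alone. The delicate cases are those where $\beta$ is close to $\gamma$, so that only few (in the extreme case, no) derivatives remain on $v_h$; here the inverse inequalities and the overlap condition $h\le\delta$ must be used precisely to show these terms are dominated by the leading $\lvert\beta\rvert=1$ term, so that the final bound retains only the single factor $h/\delta$ and the denominator $\nu_{-,j}$, with no spurious higher power of $h/\delta$ nor dependence on $\tilde c_{-,j}$. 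Once this bookkeeping is settled, summation over the elements of $\overline{\Omega}_j$ and over the gradient components is routine.
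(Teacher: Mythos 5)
Your proof skeleton coincides with the paper's: work elementwise, apply \eqref{eq:errEst} to $\chi_j v_h$, expand $\partial_\mathbf{x}^\gamma(\chi_j v_h)$ by the Leibniz rule and discard the $\beta=0$ term because $v_h|_\tau\in\mathbb{P}_{r-1}$, bound $\partial_\mathbf{x}^\beta\chi_j$ through \eqref{eq:2-21}, absorb the binomial sum into $c(r,d)$, use $h\le\delta$ to see that the worst combination of powers occurs at $\lvert\beta\rvert=1$, and recombine the $\mathrm{L}^2$ and $\mathrm{H}^1$ pieces at the end. Up to one step, this is exactly the paper's argument.

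The genuine gap is the step you yourself flag as the main obstacle, and it is not merely unproven: as you state it, it is false. You want to control \emph{every} Leibniz factor $\partial_\mathbf{x}^{\gamma-\beta}v_h$, including the extreme case $\beta=\gamma$ in which no derivative falls on $v_h$, by the gradient seminorm $\lvert v_h\rvert_{\mathrm{H}^1}$ alone, so that only $\nu_{-,j}^{-1/2}$ enters the final constant. For $\lvert\beta\rvert\le r-1$ this is fine (seminorm-to-seminorm inverse inequalities hold, since constants lie in the kernel of both sides), but for $\beta=\gamma$ the factor is $v_h$ itself measured in $\mathrm{L}^2(\tau)$, and no inverse inequality bounds an $\mathrm{L}^2$ norm by a gradient. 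Take $v_h\equiv 1$, which belongs to $\mathcal{V}^h_j$ whenever $\overline{\Omega}_j\cap\Gamma_D=\emptyset$: then $(\partial_\mathbf{x}^\gamma\chi_j)\,v_h$ is in general nonzero, while $\lvert v_h\rvert_{\mathrm{H}^1(\tau)}=0$ and every Leibniz term with $\beta\ne\gamma$ vanishes as well, since each carries at least one derivative of $v_h$. In particular your fallback claim --- that the $\beta=\gamma$ term is dominated by the $\lvert\beta\rvert=1$ term thanks to $h\le\delta$ --- fails on constants: the supposedly dominating term is zero and the dominated one is not.

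This is precisely why the paper does not aim at the seminorm but at the full weighted norm $\lVert v_h\rVert_{1,c,\tau}$: its mass term $\int_\tau \tilde c\, v_h^2$ is what controls the zero-derivative case. Technically, the paper performs the change of variables $\mathbf{y}=\sqrt{\tilde c_{-,j}/\nu_{-,j}}\,\mathbf{x}$, under which (a lower bound of) $\lVert v_h\rVert_{1,c,\tau}$ becomes a multiple of the \emph{full} $\mathrm{H}^1$ norm of the rescaled function on the rescaled element $\tau_c$; the inverse inequality is then applied on $\tau_c$ against this full norm for every $\lvert\beta\rvert$, and when the scaling is undone the ratios $\tilde c_{-,j}/\nu_{-,j}$ cancel, which is how \eqref{eq:Cerrj} ends up containing only $1/\sqrt{\nu_{-,j}}$ even though the right-hand side of \eqref{eq:RCDsim3-10bis} is $\lVert v_h\rVert_{1,c,\Omega_j}$ rather than $\lvert v_h\rvert_{\mathrm{H}^1(\Omega_j)}$. (You did correctly locate the crux: note that even in the paper this $\beta=\gamma$ case is the delicate one, since there the inverse estimate is invoked with left-hand order $r-\lvert\beta\rvert=0<1$, formally outside the standard range of such inequalities.) If you replace your intermediate target ``gradient seminorm of $v_h$'' by ``weighted norm $\lVert v_h\rVert_{1,c,\tau}$'' and run the inverse inequality in the rescaled variables, your outline becomes the paper's proof; as it stands, the reduction you propose cannot be carried out.
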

\begin{proof}
For each simplex $\tau \in \mathcal{T}^h$, $\tau \subset \Omega_j$, from \eqref{eq:errEst} we have 
\begin{equation}
\label{eq:errEstchi}
\lVert (I - \Pi^h)(\chi_j v_h) \rVert_{\mathrm{L}^2(\tau)} + h \lvert (I - \Pi^h)(\chi_j v_h) \rvert_{\mathrm{H}^1(\tau)} \le C_\Pi h^r \lvert \chi_j v_h \rvert_{\mathrm{H}^r(\tau)}.   
\end{equation}
In order to estimate $\lvert \chi_j v_h \rvert_{\mathrm{H}^r(\tau)}$, let $\gamma \in \mathbb{N}^d$ be a multi-index of order $r$, i.e. $\lvert \gamma \rvert = r$. 
By the multivariate Leibniz rule and observing that $\partial_\mathbf{x}^\gamma v_h = 0$ since $v_h|_\tau$ is a polynomial of degree $r-1$, we have
\[
\partial_\mathbf{x}^\gamma (\chi_jv_h) = \sum_{\beta \,|\, 0\le\beta\le\gamma}\binom{\gamma}{\beta}(\partial_\mathbf{x}^\beta\chi_j)(\partial_\mathbf{x}^{\gamma-\beta}v_h) 
= \sum_{\beta \,|\, 0 < \beta\le\gamma}\binom{\gamma}{\beta}(\partial_\mathbf{x}^\beta\chi_j)(\partial_\mathbf{x}^{\gamma-\beta}v_h), 
\]
(note that in the last equality the multi-index $0 = (0, \dots, 0) \in \mathbb{N}^d$ is excluded).
Then, setting $c(r,d) = \max_{\vert \gamma\vert = r}\sum_{\beta \,|\, 0 < \beta\le\gamma}\binom{\gamma}{\beta}$, and using \eqref{eq:2-21}, we get
\begin{equation}
\label{eq:Dalpha}
\Vert \partial_\mathbf{x}^\gamma (\chi_jv_h) \rVert_{\mathrm{L}^2(\tau)} \le c(r,d) C_\textup{dPU} \max_{\beta \,|\, 0<\beta\le\gamma}\delta^{-\lvert \beta \rvert} \lvert v_h \rvert_{\mathrm{H}^{r-\lvert \beta \rvert}(\tau)}.
\end{equation}

Now we want to estimate $\lvert v_h \rvert_{\mathrm{H}^{r-\lvert \beta \rvert}(\tau)}$ using an inverse inequality, but in terms of the weighted norm $\lVert \; \rVert_{1,c,\tau}$ instead of the standard $\lVert \; \rVert_{H^1(\tau)}$ norm, and without making regime assumptions on the coefficients of the equation.   
First of all, note that, 
performing the change of variables $\mathbf{y}= \sqrt{\frac{\tilde{c}_{-,j}}{\nu_{-,j}}} \mathbf{x}$ and setting 
\[
\tau_c \coloneqq \biggl \{ \sqrt{\frac{\tilde{c}_{-,j}}{\nu_{-,j}}}\mathbf{x} \,\bigg |\, \mathbf{x} \in \tau \biggr \}, \quad 
\phi_c(v_h)(\mathbf{y}) \coloneqq v_h(\mathbf{x}) = v_h\left(\mathbf{y}\sqrt{\frac{\nu_{-,j}}{\tilde{c}_{-,j}}}\right),
\]
we can rewrite 
\begin{equation}
\label{eq:normschange}
\begin{split}
\lVert v_h \rVert_{1,c,\tau}^2
&\ge \int_\tau \left( \tilde{c}_{-,j} v_h ^2 + \nu_{-,j} \lvert \nabla_{\mathbf{x}} v_h|^2 \right) d\mathbf{x} \\
&=  \int_{\tau_c} \left(\tilde{c}_{-,j} (\phi_c(v_h))^2 + \nu_{-,j} \frac{\tilde{c}_{-,j}}{\nu_{-,j}} \lvert \nabla_{\mathbf{y}} \phi_c(v_h)|^2 \right) \biggl(\sqrt{\frac{\tilde{c}_{-,j}}{\nu_{-,j}}}\biggr)^{-d} d\mathbf{y} \\
&= \nu_{-,j} \left(\frac{\tilde{c}_{-,j}}{\nu_{-,j}}\right)^{1-d/2}  \lVert \phi_c(v_h) \rVert_{\mathrm{H}^1(\tau_c)}^2.
\end{split}
\end{equation}
Performing the same change of variables, we examine $\lvert v_h \rvert_{\mathrm{H}^{r-\lvert \beta \rvert}(\tau)}$: 
\[
\begin{split}
\lvert v_h \rvert_{\mathrm{H}^{r-\lvert\beta\rvert}(\tau)}^2 
&= \sum_{\xi \,|\, \lvert \xi \rvert =  r-\lvert\beta\rvert} \int_\tau \lvert \partial_\mathbf{x}^\xi v_h \rvert^2 d\mathbf{x} \\
&= \sum_{\xi \,|\, \lvert \xi \rvert =  r-\lvert\beta\rvert} \int_{\tau_c} \left(\frac{\tilde{c}_{-,j}}{\nu_{-,j}}\right)^{r-\lvert\beta\rvert}\lvert \partial_\mathbf{y}^\xi \phi_c(v_h) \rvert^2 \biggl(\sqrt{\frac{\tilde{c}_{-,j}}{\nu_{-,j}}}\biggr)^{-d}{d\mathbf{y}}\\
& = \left(\frac{\tilde{c}_{-,j}}{\nu_{-,j}}\right)^{r-\lvert\beta\rvert-d/2} \lvert \phi_c (v_h) \rvert_{\mathrm{H}^{r-\lvert\beta\rvert}(\tau_c)}^2,  
\end{split}
\]
so, using a standard inverse inequality (see e.g. \cite[Theorem 3.2.6]{Ciarlet:book:1978}), applied with $\sqrt{\frac{\tilde{c}_{-,j}}{\nu_{-,j}}}h$ (diameter of $\tau_c$), we get 
 \[
\begin{split}
\lvert v_h \rvert_{\mathrm{H}^{r-\lvert\beta\rvert}(\tau)}^2 
&\le C_\textup{inv}  \left(\frac{\tilde{c}_{-,j}}{\nu_{-,j}}\right)^{r-\lvert\beta\rvert-d/2} \biggl(\sqrt{\frac{\tilde{c}_{-,j}}{\nu_{-,j}}}h\biggr)^{-2(r-\lvert \beta \rvert -1)} \lVert \phi_c (v_h) \rVert_{\mathrm{H}^{1}(\tau_c)}^2 \\
& = C_\textup{inv} \left(\frac{\tilde{c}_{-,j}}{\nu_{-,j}}\right)^{1-d/2} h^{-2(r-\lvert \beta \rvert -1)} \lVert \phi_c (v_h) \rVert_{\mathrm{H}^{1}(\tau_c)}^2 \\
& \le C_\textup{inv} h^{-2(r-\lvert \beta \rvert -1)} \frac{1}{\nu_{-,j}} \lVert v_h \rVert_{1,c,\tau}^2,
\end{split}
\]
where the last inequality comes from \eqref{eq:normschange} (reversed). 
Therefore \eqref{eq:Dalpha} becomes:
\begin{equation}
\label{eq:Dalphafinal}
\begin{split}
\Vert \partial_\mathbf{x}^\gamma (\chi_jv_h) \rVert_{\mathrm{L}^2(\tau)} &\le 
c(r,d) C_\textup{dPU} \sqrt{C_\textup{inv}} \max_{m=1,\dots,r}\delta^{-m} h^{-(r- m  -1)}  \frac{1}{\sqrt{\nu_{-,j}}} \lVert v_h \rVert_{1,c,\tau} \\
& = c(r,d) C_\textup{dPU} \sqrt{C_\textup{inv}} \delta^{-1} h^{-r+2}  \frac{1}{\sqrt{\nu_{-,j}}} \lVert v_h \rVert_{1,c,\tau}, 
\end{split}
\end{equation}
where we have used the fact that $(h/\delta)\le 1$, so that the maximum is attained for $m=1$. 

Finally, combining \eqref{eq:errEstchi} and \eqref{eq:Dalphafinal}, and summing over all simplices $\tau \subset \Omega_j$, we obtain
\begin{align}
\lVert (\mathrm{I} - \Pi^h)(\chi_j v_h) \rVert_{\mathrm{L}^2(\Omega_j)} &\le C_\Pi c(r,d) C_\textup{dPU} \sqrt{C_\textup{inv}} \frac{h^2}{\delta} \frac{1}{\sqrt{\nu_{-,j}}} \lVert v_h \rVert_{1,c,\Omega_j},\label{eq:funcnorm}\\
\lvert (\mathrm{I} - \Pi^h)(\chi_j v_h) \rvert_{\mathrm{H}^1(\Omega_j)} &\le C_\Pi c(r,d) C_\textup{dPU} \sqrt{C_\textup{inv}} \frac{h}{\delta} \frac{1}{\sqrt{\nu_{-,j}}} \lVert v_h \rVert_{1,c,\Omega_j}. \label{eq:gradientnorm}
\end{align}
Now, applying $\sqrt{a^2+b^2} \le a+b$ with $a$ the left-hand side of \eqref{eq:funcnorm} multiplied by $\sqrt{\tilde{c}_{+,j}}$ and $b$ the left-hand side of \eqref{eq:gradientnorm} multiplied by $\sqrt{\nu_{+,j}}$ in order to recover the weighted norm, we obtain 
\[
\lVert (\mathrm{I} - \Pi^h)(\chi_j v_h) \rVert_{1,c, \Omega_j} \le C_\Pi c(r,d) C_\textup{dPU} \sqrt{C_\textup{inv}} \bigl (\sqrt{\tilde{c}_{+,j}} h + \sqrt{\nu_{+,j}} \bigr) \frac{h}{\delta} \frac{1}{\sqrt{\nu_{-,j}}} \lVert v_h \rVert_{1,c,\Omega_j}.
\]
\end{proof}

We prove now the stability bound \eqref{eq:stability}. 
\begin{lemma}(Stability bound for the local problems)
\label{lem:RCDstab}
For all $u_h^j \in \mathcal{V}_j^h$, we have
\begin{equation}
\label{eq:RCDstability}
\lVert u_h^j \rVert_{1,c,\Omega_j} \le \sup_{v_h^j \in \mathcal{V}_j^h \setminus \{0\}} \left( \frac{| a_j(u_h^j,v_h^j) |}{\lVert v_h^j \rVert_{1,c,\Omega_j}} \right). 
\end{equation}
Therefore, recalling the relation in \eqref{eq:relsBilForms} between the local continuous and discrete bilinear forms, assumption \eqref{eq:stability} is satisfied with 
\[
C_{\textup{stab},j} =1.
\]
\end{lemma}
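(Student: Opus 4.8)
The plan is to derive the discrete inf-sup condition \eqref{eq:RCDstability} directly from the coercivity estimate \eqref{eq:RCDcoercLoc} of Lemma~\ref{lem:RCDcoerc}, exploiting the fact that the coercivity constant there equals exactly $1$. Since $\mathcal{V}_j^h \subset \mathrm{H}^1(\Omega_j)$, the bound $a_j(v,v) \ge \lVert v \rVert_{1,c,\Omega_j}^2$ applies in particular to every discrete function $v = u_h^j \in \mathcal{V}_j^h$, so no further approximation-theoretic ingredient is needed and the result will hold with $C_{\textup{stab},j}=1$.

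First I would dispose of the trivial case $u_h^j = 0$, for which the left-hand side of \eqref{eq:RCDstability} vanishes while the right-hand side is nonnegative. Assuming then $u_h^j \ne 0$, so that $\lVert u_h^j \rVert_{1,c,\Omega_j} > 0$ and $u_h^j$ is itself an admissible test function, the key step is to bound the supremum from below by the single choice $v_h^j = u_h^j$. By Lemma~\ref{lem:RCDcoerc} the quantity $a_j(u_h^j,u_h^j)$ is real and nonnegative (it equals $\int_{\Omega_j}(\tilde{c}\,(u_h^j)^2 + \nu |\nabla u_h^j|^2) + \int_{\partial\Omega_j\setminus\Gamma_D}\alpha\,(u_h^j)^2 \ge 0$), whence $|a_j(u_h^j,u_h^j)| = a_j(u_h^j,u_h^j) \ge \lVert u_h^j \rVert_{1,c,\Omega_j}^2$. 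Dividing by $\lVert u_h^j \rVert_{1,c,\Omega_j}$ gives
\[
\sup_{v_h^j \in \mathcal{V}_j^h \setminus \{0\}} \frac{| a_j(u_h^j,v_h^j) |}{\lVert v_h^j \rVert_{1,c,\Omega_j}} \ge \frac{| a_j(u_h^j,u_h^j) |}{\lVert u_h^j \rVert_{1,c,\Omega_j}} \ge \lVert u_h^j \rVert_{1,c,\Omega_j},
\]
which is exactly \eqref{eq:RCDstability}; passing to the discrete bilinear form via \eqref{eq:relsBilForms} then yields $C_{\textup{stab},j}=1$.

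There is essentially no analytical obstacle left to overcome here: the entire difficulty has already been absorbed into establishing coercivity \emph{with constant exactly} $1$, which in turn rested on the careful construction of $a_j$ at the start of Section~\ref{sec:RCDeq} — the symmetric splitting of the convective term $\dive(\mathbf{a}u)v$, the use of $\tilde{c}=c_0+\tfrac{1}{2}\dive\mathbf{a}$ together with the sign assumption \eqref{eq:hypctilde}, and the Robin-type transmission condition with $\alpha\ge 0$, so that the anti-symmetric part integrates to zero and only nonnegative boundary contributions survive. Because that constant is $1$ rather than some smaller value, the stability constant is the optimal $C_{\textup{stab},j}=1$; a smaller coercivity constant would simply produce its reciprocal. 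I would also stress that this argument uses only coercivity and is therefore completely insensitive to the non-symmetry of $a_j$: the anti-symmetric part never enters, precisely because the trial function is tested against itself.
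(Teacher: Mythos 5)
Your proof is correct, and it reaches the same essential conclusion as the paper — that the stability constant is the reciprocal of the coercivity constant, which equals $1$ — but by a more self-contained route. The paper's proof is a one-line citation: it invokes the Lax--Milgram theorem together with \emph{both} Lemma~\ref{lem:RCDcont} (continuity) and Lemma~\ref{lem:RCDcoerc} (coercivity), and reads off the stability constant as the reciprocal of the coercivity constant. You instead unfold what is inside that black box: testing with $v_h^j = u_h^j$ and using coercivity alone gives
\[
\sup_{v_h^j \in \mathcal{V}_j^h \setminus \{0\}} \frac{| a_j(u_h^j,v_h^j) |}{\lVert v_h^j \rVert_{1,c,\Omega_j}} \ge \frac{a_j(u_h^j,u_h^j)}{\lVert u_h^j \rVert_{1,c,\Omega_j}} \ge \lVert u_h^j \rVert_{1,c,\Omega_j},
\]
after disposing of the trivial case $u_h^j=0$. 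This buys you two things: (i) the argument is elementary and avoids citing an external theorem, and (ii) it makes explicit that continuity of $a_j$ is not needed for the inf-sup \emph{inequality} itself (continuity is only needed if one also wants well-posedness of the local problems, i.e.\ invertibility of $B_j$, which the preconditioner of course requires but which is not what \eqref{eq:RCDstability} asserts). Your observation that coercivity applies to discrete functions because $\mathcal{V}_j^h \subset \mathrm{H}^1(\Omega_j)$, and that $a_j(u_h^j,u_h^j)$ is real and nonnegative so the absolute value is harmless, correctly handles the only two points where the argument could have been glossed over. Your closing remark — that the whole content of the lemma was front-loaded into obtaining coercivity with constant exactly $1$ via the skew-symmetric splitting of the convection term and the choice of Robin coefficient — matches the paper's own commentary following the lemma.
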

\begin{proof}
This is a consequence of Lemmas \ref{lem:RCDcont}--\ref{lem:RCDcoerc} and Lax-Milgram theorem (see e.g. \cite[Theorem 5.14]{Spence:ibps:2015}): note that the constant in the stability bound is the reciprocal of the constant in the coercivity bound \eqref{eq:RCDcoercLoc}, which is $1$.  
\end{proof}
\noindent
The good constant obtained in the stability estimate 
is a result of careful choices made in the derivation of the bilinear form, as already pointed out for the coercivity estimate \eqref{eq:RCDcoercLoc}. 

Next, we prove estimates for assumption  \eqref{eq:sim3-16}. 
\begin{lemma}[$C_{D,j}$ in \eqref{eq:sim3-16}]
For all $v \in \mathrm{H}^1(\Omega_j)$ 
\begin{equation}
\label{eq:RCDsim3-1}
\lVert \chi_j v \rVert_{1,c, \Omega_j} \le  \sqrt{2} \left ( 1 + C_\textup{dPU} \sqrt{\frac{\nu_{+,j}}{\tilde{c}_{-,j}}} \frac{1}{\delta} \right )  \lVert v \rVert_{1,c, \Omega_j}, 
\end{equation}
where $C_\textup{dPU}$ appears in \eqref{eq:2-21}.
Moreover, for all $v_h \in \mathcal{V}^h_j$, 
{
\begin{equation}
\label{eq:RCDsim3-16}
\lVert \Pi^h(\chi_j v_h) \rVert_{1,c, \Omega_j}   
\le C_{D,j}
\lVert v_h \rVert_{1,c, \Omega_j}, 
\end{equation}
which is the finite element expression of assumption~\eqref{eq:sim3-16}, where 
\begin{equation}
\label{eq:RCDsim3-16const}
C_{D,j} = \sqrt{2} \left ( 1 + C_\textup{dPU} \sqrt{\frac{\nu_{+,j}}{\tilde{c}_{-,j}}} \frac{1}{\delta} \right )  + C_{\textup{err},j},
\end{equation}
}
with $ C_{\textup{err},j}$ given by \eqref{eq:Cerrj}.
\end{lemma}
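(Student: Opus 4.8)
The plan is to establish the continuous estimate \eqref{eq:RCDsim3-1} first by a direct computation on the weighted norm, and then to deduce the finite element bound \eqref{eq:RCDsim3-16} by splitting $\Pi^h(\chi_j v_h)$ into $\chi_j v_h$ plus the interpolation error and invoking Lemma~\ref{lem:errInterpChi} for the latter. The crux of the argument is the continuous estimate; the discrete step is then a one-line triangle-inequality argument.

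For \eqref{eq:RCDsim3-1}, I would start from
\[
\lVert \chi_j v \rVert_{1,c,\Omega_j}^2 = \int_{\Omega_j} \tilde c\, \chi_j^2 v^2 + \int_{\Omega_j} \nu\, \lvert \nabla(\chi_j v)\rvert^2
\]
and treat the two contributions separately. Since $0 \le \chi_j \le 1$, the first term is bounded by $\int_{\Omega_j}\tilde c\, v^2$. For the gradient, I expand $\nabla(\chi_j v) = \chi_j\nabla v + v\nabla\chi_j$ and apply $\lvert a+b\rvert^2 \le 2\lvert a\rvert^2 + 2\lvert b\rvert^2$ together with $\chi_j^2\le 1$, giving $\lvert\nabla(\chi_j v)\rvert^2 \le 2\lvert\nabla v\rvert^2 + 2 v^2\lvert\nabla\chi_j\rvert^2$. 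The key idea, which is what allows the bound to hold without any assumption on the relative sizes of the coefficients and $\delta$, is to absorb the term $\int_{\Omega_j}\nu\, v^2\lvert\nabla\chi_j\rvert^2$ into the \emph{reaction} part of the norm rather than the diffusion part: I bound $\lvert\nabla\chi_j\rvert \le C_\textup{dPU}/\delta$ via \eqref{eq:2-21} and convert $\int_{\Omega_j} v^2$ into a $\tilde c$-weighted integral using $1 \le \tilde c/\tilde c_{-,j}$, so that
\[
\int_{\Omega_j}\nu\, v^2\lvert\nabla\chi_j\rvert^2 \le \frac{\nu_{+,j}}{\tilde c_{-,j}}\frac{C_\textup{dPU}^2}{\delta^2}\int_{\Omega_j}\tilde c\, v^2.
\]
Collecting the estimates yields $\lVert\chi_j v\rVert_{1,c,\Omega_j}^2 \le \bigl(1 + 2\tfrac{\nu_{+,j}}{\tilde c_{-,j}}\tfrac{C_\textup{dPU}^2}{\delta^2}\bigr)\int_{\Omega_j}\tilde c\, v^2 + 2\int_{\Omega_j}\nu\,\lvert\nabla v\rvert^2$, and the stated constant follows from the elementary inequality $\max\{1+2x^2,\,2\} \le 2(1+x)^2$ (both gaps reduce to $1+4x\ge 0$ and $4x+2x^2\ge 0$) applied with $x = C_\textup{dPU}\sqrt{\nu_{+,j}/\tilde c_{-,j}}\,/\delta$.

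For the discrete bound \eqref{eq:RCDsim3-16}, I would write $\Pi^h(\chi_j v_h) = \chi_j v_h - (I - \Pi^h)(\chi_j v_h)$ and use the triangle inequality for $\lVert\cdot\rVert_{1,c,\Omega_j}$ (a genuine norm since $\tilde c,\nu>0$). The first summand is controlled by \eqref{eq:RCDsim3-1} with $v = v_h$, and the second by Lemma~\ref{lem:errInterpChi}, contributing the factor $C_{\textup{err},j}$; adding the two constants reproduces exactly $C_{D,j}$ in \eqref{eq:RCDsim3-16const}. The reading of \eqref{eq:RCDsim3-16} as the finite element form of \eqref{eq:sim3-16} follows because $D_j\mathbf{W}^j$ is the vector of degrees of freedom of $\Pi^h(\chi_j v_h^j)$. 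The only genuinely delicate step is the continuous estimate above — specifically the decision to route the partition-of-unity gradient term into the $\tilde c$-weighted part of the norm, which is precisely what keeps the constant regime-independent; everything else is routine.
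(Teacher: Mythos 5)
Your proof is correct and follows essentially the same route as the paper's: bound the reaction term using $|\chi_j|\le 1$, split the gradient via $|\nabla(\chi_j v)|^2 \le 2|(\nabla\chi_j)v|^2 + 2|\chi_j\nabla v|^2$, absorb the $\nu\,v^2|\nabla\chi_j|^2$ term into the $\tilde c$-weighted part of the norm using \eqref{eq:2-21} and $\nu_{+,j}/\tilde c_{-,j}$, and then conclude the discrete bound by the triangle inequality together with Lemma~\ref{lem:errInterpChi}. The only (immaterial) difference is the final elementary step, where you use $\max\{1+2x^2,\,2\}\le 2(1+x)^2$ while the paper passes through $2(1+x^2)$ before relaxing to $2(1+x)^2$.
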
 
\begin{proof}
We have
\[
\lVert \chi_j v \rVert_{1,c, \Omega_j}^2 \le 
\int_{\Omega_j} \tilde{c} \lvert \chi_j v \rvert^2 + 2 \int_{\Omega_j} \nu |(\nabla \chi_j)v|^2 + 2 \int_{\Omega_j} \nu |\chi_j \nabla v|^2
\]
and using $|\chi_j| \le 1$ and \eqref{eq:2-21} we get
\[
\begin{split}
\lVert \chi_j v \rVert_{1,c, \Omega_j}^2 & \le
\int_{\Omega_j} \tilde{c} \lvert v \rvert^2 + 2 \int_{\Omega_j} \nu \, C_\textup{dPU}^2 \frac{1}{\delta^2} |v|^2 + 2 \int_{\Omega_j} \nu |\nabla v|^2\\
& \le 2 \Bigl ( 1 + C_\textup{dPU}^2 \frac{\nu_{+,j}}{\tilde{c}_{-,j}} \frac{1}{\delta^2} \Bigr ) \lVert v \rVert_{1,c, \Omega_j}^2.
\end{split}
\]

Now, for the second estimate, using the triangle inequality, the newly found inequality \eqref{eq:RCDsim3-1} and \eqref{eq:RCDsim3-10bis}, we get
\[
\begin{split}
\lVert \Pi^h(\chi_j v_h) \rVert_{1,c, \Omega_j} &\le \lVert \chi_j v_h \rVert_{1,c, \Omega_j}  + \lVert (\mathrm{I} - \Pi^h)(\chi_j v_h) \rVert_{1,c, \Omega_j}\\
& \le \left [ \sqrt{2} \left ( 1 + C_\textup{dPU} \sqrt{\frac{\nu_{+,j}}{\tilde{c}_{-,j}}} \frac{1}{\delta} \right )  + C_{\textup{err},j}  \right] \lVert v_h \rVert_{1,c, \Omega_j}.
\end{split}
\]
\end{proof} 

Next, we prove estimates for assumption \eqref{eq:cDF}, which involves a commutator between the partition of unity and the local inner product matrix. 

\begin{lemma}[{$C_{DF,j}$ in \eqref{eq:cDF}}]
\label{lem:RCDsimLemma3-7i}
For all $v,w \in \mathrm{H}^1(\Omega_j)$ 
\begin{equation}
\label{eq:RCDsimLemma3-7i}
\lvert (v,\chi_j w)_{1,c,\Omega_j} - (\chi_j v,w)_{1,c,\Omega_j}  \rvert \le  C_\textup{dPU}  \frac{\nu_{+,j}}{\sqrt{\tilde{c}_{-,j}\nu_{-,j}}} \frac{1}{\delta}   \lVert v \rVert_{1,c, \Omega_j} \lVert w\rVert_{1,c, \Omega_j},
\end{equation}
where $C_\textup{dPU}$ appears in \eqref{eq:2-21}.
Moreover, {for all $v_h, w_h \in \mathcal{V}^h_j$
\begin{equation}
\label{eq:RCD-cDF}
\lvert (v_h,\Pi^h(\chi_j w_h))_{1,c,\Omega_j} - (\Pi^h(\chi_j v_h),w_h)_{1,c,\Omega_j}  \rvert  \le C_{DF,j} \lVert v_h \rVert_{1,c, \Omega_j} \lVert w_h\rVert_{1,c, \Omega_j}
\end{equation}
which is the finite element expression of assumption~\eqref{eq:cDF}, where 
\begin{equation}
\label{eq:RCD-cDFconst}
C_{DF,j} =  C_\textup{dPU} \frac{\nu_{+,j}}{\sqrt{\tilde{c}_{-,j}\nu_{-,j}}} \frac{1}{\delta} + 2 C_{\textup{err},j},
\end{equation}
} 
with $ C_{\textup{err},j}$ given by \eqref{eq:Cerrj}.
\end{lemma}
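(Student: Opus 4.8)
The plan is to prove the continuous estimate \eqref{eq:RCDsimLemma3-7i} first and then to deduce the finite element version \eqref{eq:RCD-cDF} by comparing $\Pi^h(\chi_j\,\cdot\,)$ with $\chi_j\,\cdot\,$. Write $S(v,w)\coloneqq (v,\chi_j w)_{1,c,\Omega_j} - (\chi_j v,w)_{1,c,\Omega_j}$ for the quantity to be bounded. First I would expand both inner products using the definition $(u,v)_{1,c,\Omega_j}=\int_{\Omega_j}(\tilde{c}\,uv+\nu\nabla u\cdot\nabla v)$ together with the product rule $\nabla(\chi_j w)=w\nabla\chi_j+\chi_j\nabla w$ (and likewise for $\chi_j v$). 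Upon subtraction the reaction terms $\tilde{c}\,\chi_j vw$ cancel, and the symmetric diffusion terms $\nu\chi_j\nabla v\cdot\nabla w$ cancel as well, leaving only the contribution carried by $\nabla\chi_j$:
\[
S(v,w)=\int_{\Omega_j}\nu\,\nabla\chi_j\cdot\bigl(w\nabla v-v\nabla w\bigr).
\]
This cancellation is the crux of the lemma: the commutator only feels the overlap region where $\nabla\chi_j\neq0$.

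To estimate this remainder I would bound $\nu\le\nu_{+,j}$ and use the first-order derivative bound $|\nabla\chi_j|\le C_\textup{dPU}/\delta$ coming from \eqref{eq:2-21} (with $|\beta|=1$), and then apply the Cauchy--Schwarz inequality, pairing each factor $v,w$ with the weight $\sqrt{\tilde{c}_{-,j}}$ and each gradient factor with the weight $\sqrt{\nu_{-,j}}$. Since $\tilde{c}\ge\tilde{c}_{-,j}$ and $\nu\ge\nu_{-,j}$, this converts the $\mathrm{L}^2$ norms of $v,w$ and of $\nabla v,\nabla w$ into the weighted norms $\lVert\cdot\rVert_{1,c,\Omega_j}$ and produces the prefactor $\nu_{+,j}/\sqrt{\tilde{c}_{-,j}\nu_{-,j}}$. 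The main obstacle is to treat the antisymmetric pair $w\nabla v-v\nabla w$ as a whole rather than term by term, so that the final constant remains $C_\textup{dPU}\,\nu_{+,j}\,\delta^{-1}/\sqrt{\tilde{c}_{-,j}\nu_{-,j}}$ exactly as stated; bounding the two summands separately and adding them would risk a spurious multiplicative factor, so carefully exploiting the antisymmetry (equivalently, the cancellation visible in the identity above) is the delicate point.

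For the finite element estimate \eqref{eq:RCD-cDF} I would add and subtract the continuous quantities, writing for $v_h,w_h\in\mathcal{V}_j^h$
\begin{multline*}
(v_h,\Pi^h(\chi_j w_h))_{1,c,\Omega_j}-(\Pi^h(\chi_j v_h),w_h)_{1,c,\Omega_j}\\
=S(v_h,w_h)-\bigl(v_h,(\mathrm I-\Pi^h)(\chi_j w_h)\bigr)_{1,c,\Omega_j}+\bigl((\mathrm I-\Pi^h)(\chi_j v_h),w_h\bigr)_{1,c,\Omega_j}.
\end{multline*}
The term $S(v_h,w_h)$ is controlled by the continuous estimate \eqref{eq:RCDsimLemma3-7i} just proved, giving the first summand of $C_{DF,j}$. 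Each of the two remaining terms is handled by the Cauchy--Schwarz inequality for $(\cdot,\cdot)_{1,c,\Omega_j}$ together with Lemma~\ref{lem:errInterpChi}, namely \eqref{eq:RCDsim3-10bis}, which bounds $\lVert(\mathrm I-\Pi^h)(\chi_j v_h)\rVert_{1,c,\Omega_j}$ and $\lVert(\mathrm I-\Pi^h)(\chi_j w_h)\rVert_{1,c,\Omega_j}$ by $C_{\textup{err},j}\lVert v_h\rVert_{1,c,\Omega_j}$ and $C_{\textup{err},j}\lVert w_h\rVert_{1,c,\Omega_j}$ respectively; together they account for the extra $2C_{\textup{err},j}$ in \eqref{eq:RCD-cDFconst}. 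I expect this last step to be routine bookkeeping, the genuine difficulty residing entirely in the sharp treatment of the antisymmetric remainder at the continuous level.
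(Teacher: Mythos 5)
Your proposal is correct and follows essentially the same route as the paper: the same cancellation yielding the commutator identity $\int_{\Omega_j}\nu\,\nabla\chi_j\cdot(w\nabla v-v\nabla w)$, the same weighted Cauchy--Schwarz estimate giving the prefactor $C_\textup{dPU}\,\nu_{+,j}/(\delta\sqrt{\tilde{c}_{-,j}\nu_{-,j}})$, and the same add-and-subtract interpolation argument producing the $2C_{\textup{err},j}$ term. One small caveat: the sharp constant comes not from the antisymmetry of $w\nabla v-v\nabla w$ (a plus sign would work just as well) but from the cross-paired Cauchy--Schwarz inequality in $\mathbb{R}^2$ that you in fact describe, namely $\sqrt{\tilde{c}_{-,j}}\lVert w\rVert_{\mathrm{L}^2(\Omega_j)}\sqrt{\nu_{-,j}}\lVert\nabla v\rVert_{\mathrm{L}^2(\Omega_j)}+\sqrt{\tilde{c}_{-,j}}\lVert v\rVert_{\mathrm{L}^2(\Omega_j)}\sqrt{\nu_{-,j}}\lVert\nabla w\rVert_{\mathrm{L}^2(\Omega_j)}\le\lVert v\rVert_{1,c,\Omega_j}\lVert w\rVert_{1,c,\Omega_j}$, applied after the triangle inequality.
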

\begin{proof}
Note that
\[
\begin{split}
&(v,\chi_j w)_{1,c,\Omega_j} - (\chi_j v,w)_{1,c,\Omega_j} \\
&= \int_{\Omega_j} \nu \nabla v \cdot (w \nabla \chi_j + \chi_j \nabla w) - \int_{\Omega_j} \nu (v \nabla \chi_j + \chi_j \nabla v) \cdot \nabla w \\
&= \int_{\Omega_j} \nu \nabla\chi_j \cdot (w \nabla v - v \nabla w). 
\end{split}
\]
Then, by the Cauchy-Schwarz inequality and \eqref{eq:2-21}
\[
\begin{split}
& \lvert (v,\chi_j w)_{1,c,\Omega_j} - (\chi_j v,w)_{1,c,\Omega_j}  \rvert  \\  
& \le \nu_{+,j} \, C_\textup{dPU} \frac{1}{\delta} \bigl( \lVert w \rVert_{\mathrm{L}^2(\Omega_j)}  \lVert \nabla v \rVert_{\mathrm{L}^2(\Omega_j)}
+ \lVert v \rVert_{\mathrm{L}^2(\Omega_j)} \lVert \nabla w \rVert_{\mathrm{L}^2(\Omega_j)} \bigr)\\
& =  \frac{C_\textup{dPU}}{\delta} \frac{\nu_{+,j}}{\sqrt{\tilde{c}_{-,j}\nu_{-,j}}} \Bigl( \sqrt{\tilde{c}_{-,j}} \lVert w \rVert_{\mathrm{L}^2(\Omega_j)} \sqrt{\nu_{-,j}} \lVert \nabla v \rVert_{\mathrm{L}^2(\Omega_j)} \\
&\qquad\qquad\qquad\qquad\qquad + \sqrt{\tilde{c}_{-,j}} \lVert v \rVert_{\mathrm{L}^2(\Omega_j)} \sqrt{\nu_{-,j}} \lVert \nabla w \rVert_{\mathrm{L}^2(\Omega_j)} \Bigr)\\
& = \frac{C_\textup{dPU}}{\delta} \frac{\nu_{+,j}}{\sqrt{\tilde{c}_{-,j}\nu_{-,j}}} 
\begin{pmatrix} \sqrt{\tilde{c}_{-,j}} \lVert w \rVert_{\mathrm{L}^2(\Omega_j)} & \sqrt{\nu_{-,j}} \lVert \nabla w \rVert_{\mathrm{L}^2(\Omega_j)} \end{pmatrix}
\begin{pmatrix} \sqrt{\nu_{-,j}} \lVert \nabla v \rVert_{\mathrm{L}^2(\Omega_j)} \\ \sqrt{\tilde{c}_{-,j}} \lVert v \rVert_{\mathrm{L}^2(\Omega_j)} \end{pmatrix} \\
& \le  \frac{C_\textup{dPU}}{\delta} \frac{\nu_{+,j}}{\sqrt{\tilde{c}_{-,j}\nu_{-,j}}}
\Bigl( \tilde{c}_{-,j} \lVert w \rVert_{\mathrm{L}^2(\Omega_j)}^2 + \nu_{-,j} \lVert \nabla w \rVert_{\mathrm{L}^2(\Omega_j)}^2 \Bigr)^{1/2}  \\ 
&\qquad\qquad\qquad\qquad\qquad \cdot\Bigl( \tilde{c}_{-,j} \lVert v \rVert_{\mathrm{L}^2(\Omega_j)}^2 + \nu_{-,j} \lVert \nabla v \rVert_{\mathrm{L}^2(\Omega_j)}^2 \Bigr)^{1/2} \\
& \le \frac{C_\textup{dPU}}{\delta} \frac{\nu_{+,j}}{\sqrt{\tilde{c}_{-,j}\nu_{-,j}}}  \lVert v \rVert_{1,c, \Omega_j} \lVert w\rVert_{1,c, \Omega_j},
\end{split}
\]
where at the end we have used the Cauchy-Schwarz inequality with respect to the Euclidean inner product in $\mathbb{R}^2$. 

For $C_{DF,j}$ we find the continuous analogue of the left-hand side in \eqref{eq:cDF}: 
for $\mathbf{V}^j, \mathbf{W}^j \in \mathbb{R}^{n_j}$ vectors of degrees of freedom for local functions $v_h, w_h \in \mathcal{V}^h_j$
\[
\begin{split}
&\lvert ([D_j, F_{\Omega_j}] \mathbf{V}^j, \mathbf{W}^j) \rvert 
= \lvert (F_{\Omega_j} \mathbf{V}^j, D_j\mathbf{W}^j) - (F_{\Omega_j} D_j \mathbf{V}^j, \mathbf{W}^j)\rvert \\
& = \lvert (v_h,\Pi^h(\chi_j w_h))_{1,c,\Omega_j} - (\Pi^h(\chi_j v_h),w_h)_{1,c,\Omega_j}  \rvert \\
& = \lvert ((I-\Pi^h)(\chi_j v_h),w_h)_{1,c,\Omega_j} - (v_h,(I-\Pi^h)(\chi_j w_h))_{1,c,\Omega_j} \\
& \qquad + (v_h,\chi_j w_h)_{1,c,\Omega_j} - (\chi_j v_h,w_h)_{1,c,\Omega_j}  \rvert.
\end{split}
\]
Now, by the Cauchy-Schwarz inequality and \eqref{eq:RCDsim3-10bis}
\[
\lvert ((I-\Pi^h)(\chi_j v_h),w_h)_{1,c,\Omega_j} \rvert  \le C_{\textup{err},j}  \lVert v_h \rVert_{1,c, \Omega_j} \lVert w_h \rVert_{1,c, \Omega_j}
\]
and similarly for $\lvert (v_h,(I-\Pi^h)(\chi_j w_h))_{1,c,\Omega_j} \rvert$, so, combining with \eqref{eq:RCDsimLemma3-7i}, we get 
\[
\begin{split}
\lvert ([D_j, F_{\Omega_j}] \mathbf{V}^j, \mathbf{W}^j) \rvert  & \le 
\left ( C_\textup{dPU} \frac{\nu_{+,j}}{\sqrt{\tilde{c}_{-,j}\nu_{-,j}}} \frac{1}{\delta} + 2 C_{\textup{err},j} \right )   \lVert v_h \rVert_{1,c, \Omega_j} \lVert w_h\rVert_{1,c, \Omega_j} \\
& = \left ( C_\textup{dPU} \frac{\nu_{+,j}}{\sqrt{\tilde{c}_{-,j}\nu_{-,j}}} \frac{1}{\delta} + 2 C_{\textup{err},j} \right ) \lVert \mathbf{V}^j \rVert_{\Omega_j}  \lVert \mathbf{W}^j \rVert_{\Omega_j}.
\end{split}
\]
\end{proof}

Finally, for assumption \eqref{eq:cDB} let us study the commutator between the partition of unity matrix and the local problem matrix. 
\begin{lemma}[{$C_{DB,j}$ in \eqref{eq:cDB}}]
For all $v,w \in \mathrm{H}^1(\Omega_j)$ 
\begin{equation}
\label{eq:RCDsimLemma3-7iDB}
\lvert  a_j(v,\chi_j w)- a_j(\chi_j v,w) \rvert \le  
C_\textup{dPU} \left ( \frac{\nu_{+,j}}{\sqrt{\tilde{c}_{-,j}\nu_{-,j}}}+ \frac{\lVert \mathbf{a} \rVert_{\mathrm{L}^\infty(\Omega_j)}}{\tilde{c}_{-,j}} \right)  \frac{1}{\delta}   \lVert v \rVert_{1,c, \Omega_j} \lVert w\rVert_{1,c, \Omega_j}
\end{equation}
where $C_\textup{dPU}$ appears in \eqref{eq:2-21}.
Moreover, {for all $v_h, w_h \in \mathcal{V}^h_j$
\begin{equation}
\label{eq:RCD-cDB}
\lvert a_j(v_h,\Pi^h(\chi_j w_h)) - a_j(\Pi^h(\chi_j v_h),w_h)  \rvert  \le C_{DB,j} \lVert v_h \rVert_{1,c, \Omega_j} \lVert w_h\rVert_{1,c, \Omega_j}
\end{equation}
which is the finite element expression of assumption~\eqref{eq:cDB}, where} 
\begin{equation}
\label{eq:RCD-cDBconst}
C_{DB,j} =  C_\textup{dPU} \left ( \frac{\nu_{+,j}}{\sqrt{\tilde{c}_{-,j}\nu_{-,j}}} + \frac{\lVert \mathbf{a} \rVert_{\mathrm{L}^\infty(\Omega_j)}}{\tilde{c}_{-,j}} \right)  \frac{1}{\delta} + 2 C_{\textup{cont},j}  C_{\textup{err},j},  
\end{equation}
with $C_{\textup{cont},j}$, $C_{\textup{err},j}$ given by \eqref{eq:Ccontj}, \eqref{eq:Cerrj}.
\end{lemma}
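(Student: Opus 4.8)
The plan is to first establish the continuous estimate \eqref{eq:RCDsimLemma3-7iDB} by computing the algebraic commutator $a_j(v,\chi_j w) - a_j(\chi_j v, w)$ explicitly, and then to pass to the finite element expression \eqref{eq:RCD-cDB} by inserting the interpolation operator exactly as in the proof of Lemma~\ref{lem:RCDsimLemma3-7i}. The whole argument runs parallel to that lemma, the only genuinely new feature being the convection terms of $a_j$.

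For the continuous part I would expand both $a_j(v,\chi_j w)$ and $a_j(\chi_j v, w)$ using $\nabla(\chi_j w) = w\nabla\chi_j + \chi_j\nabla w$ in the diffusion and convection integrands. The reaction term $\int_{\Omega_j}\tilde{c}\,\chi_j vw$ and the boundary term $\int_{\partial\Omega_j\setminus\Gamma_D}\alpha\,\chi_j vw$ are symmetric in the placement of $\chi_j$ and cancel in the difference. The diffusion contribution collapses, precisely as in Lemma~\ref{lem:RCDsimLemma3-7i}, to $\int_{\Omega_j}\nu\nabla\chi_j\cdot(w\nabla v - v\nabla w)$, while a careful bookkeeping of the four convection terms shows that all the pieces carrying a factor $\chi_j\,\mathbf{a}\cdot\nabla v$ or $\chi_j\,\mathbf{a}\cdot\nabla w$ cancel, leaving only $-\int_{\Omega_j}(\mathbf{a}\cdot\nabla\chi_j)vw$. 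Hence
\[
a_j(v,\chi_j w) - a_j(\chi_j v, w) = \int_{\Omega_j}\nu\,\nabla\chi_j\cdot(w\nabla v - v\nabla w) - \int_{\Omega_j}(\mathbf{a}\cdot\nabla\chi_j)\,vw,
\]
so the commutator of $a_j$ is exactly the commutator of $(\cdot,\cdot)_{1,c,\Omega_j}$ plus the convection remainder. The first term is bounded by $C_\textup{dPU}\frac{\nu_{+,j}}{\sqrt{\tilde{c}_{-,j}\nu_{-,j}}}\frac{1}{\delta}\lVert v\rVert_{1,c,\Omega_j}\lVert w\rVert_{1,c,\Omega_j}$ using verbatim the estimate already derived in Lemma~\ref{lem:RCDsimLemma3-7i}. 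For the second term I would use $|\mathbf{a}\cdot\nabla\chi_j|\le \lVert\mathbf{a}\rVert_{\mathrm{L}^\infty(\Omega_j)}C_\textup{dPU}/\delta$ via \eqref{eq:2-21} with $|\beta|=1$, apply Cauchy--Schwarz in $\mathrm{L}^2(\Omega_j)$, and control $\lVert v\rVert_{\mathrm{L}^2(\Omega_j)}\lVert w\rVert_{\mathrm{L}^2(\Omega_j)}\le \tilde{c}_{-,j}^{-1}\lVert v\rVert_{1,c,\Omega_j}\lVert w\rVert_{1,c,\Omega_j}$ (since $\tilde{c}_{-,j}\lVert v\rVert_{\mathrm{L}^2(\Omega_j)}^2\le\lVert v\rVert_{1,c,\Omega_j}^2$), giving the second summand of \eqref{eq:RCDsimLemma3-7iDB}.

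For the finite element expression I would follow Lemma~\ref{lem:RCDsimLemma3-7i} step by step. Recalling that $D_j\mathbf{W}^j$ is the vector of degrees of freedom of $\Pi^h(\chi_j w_h)$ and the relation \eqref{eq:relsBilForms}, the matrix commutator rewrites as $([D_j,B_j]\mathbf{V}^j,\mathbf{W}^j) = a_j(v_h,\Pi^h(\chi_j w_h)) - a_j(\Pi^h(\chi_j v_h),w_h)$. Writing $\Pi^h = I - (I-\Pi^h)$ isolates the continuous commutator $a_j(v_h,\chi_j w_h) - a_j(\chi_j v_h,w_h)$, already controlled above, plus the two error terms $a_j(v_h,(I-\Pi^h)(\chi_j w_h))$ and $a_j((I-\Pi^h)(\chi_j v_h),w_h)$. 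Each of the latter I would bound by combining the continuity estimate \eqref{eq:RCDloccont} of $a_j$ (constant $C_{\textup{cont},j}$) with the interpolation error bound \eqref{eq:RCDsim3-10bis} (constant $C_{\textup{err},j}$), producing the term $2C_{\textup{cont},j}C_{\textup{err},j}$ in \eqref{eq:RCD-cDBconst}.

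The main obstacle is the bookkeeping in the convection part of the continuous commutator: unlike the symmetric inner-product situation of Lemma~\ref{lem:RCDsimLemma3-7i}, the antisymmetric convection terms $\tfrac12\mathbf{a}\cdot\nabla u\,v - \tfrac12 u\,\mathbf{a}\cdot\nabla v$ must be fully expanded to verify that the $\chi_j$-weighted pieces cancel and only the $\nabla\chi_j$ piece survives. This is slightly delicate but entirely elementary; once it is settled, the sole difference from Lemma~\ref{lem:RCDsimLemma3-7i} is that estimating the interpolation error terms requires the continuity constant $C_{\textup{cont},j}$ of $a_j$ rather than the plain Cauchy--Schwarz constant $1$, which accounts for the factor $C_{\textup{cont},j}$ in \eqref{eq:RCD-cDBconst}.
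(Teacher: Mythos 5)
Your proposal is correct and follows essentially the same route as the paper: the same explicit expansion of the commutator (reaction and boundary terms cancelling, diffusion part reducing to $\int_{\Omega_j}\nu\,\nabla\chi_j\cdot(w\nabla v - v\nabla w)$ as in Lemma~\ref{lem:RCDsimLemma3-7i}, convection part leaving only $-\int_{\Omega_j}vw\,\mathbf{a}\cdot\nabla\chi_j$), the same Cauchy--Schwarz bounds with constants $C_\textup{dPU}\nu_{+,j}/(\delta\sqrt{\tilde{c}_{-,j}\nu_{-,j}})$ and $C_\textup{dPU}\lVert\mathbf{a}\rVert_{\mathrm{L}^\infty(\Omega_j)}/(\delta\,\tilde{c}_{-,j})$, and the same splitting of the discrete commutator into the continuous one plus two interpolation-error terms each bounded by $C_{\textup{cont},j}C_{\textup{err},j}$ via \eqref{eq:RCDloccont} and \eqref{eq:RCDsim3-10bis}. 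Your closing remark correctly identifies the only structural difference from Lemma~\ref{lem:RCDsimLemma3-7i}, namely that continuity of $a_j$ replaces the plain Cauchy--Schwarz inequality for the inner product.
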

\begin{proof}
Note that
\[
\begin{split}
&a_j(v,\chi_j w)- a_j(\chi_j v,w) = \frac{1}{2} \int_{\Omega_j} \chi_j w \mathbf{a} \cdot \nabla v - v \mathbf{a} \cdot (w \nabla \chi_j + \chi_j \nabla w) + \\
&-  \frac{1}{2} \int_{\Omega_j} w \mathbf{a} (v \nabla \chi_j + \chi_j \nabla v) - \chi_j v \mathbf{a} \cdot \nabla w\\
& + \int_{\Omega_j} \nu \nabla v \cdot (w \nabla \chi_j + \chi_j \nabla w) - \int_{\Omega_j} \nu (v \nabla \chi_j + \chi_j \nabla v) \cdot \nabla w \\
&= - \int_{\Omega_j} v w \, \mathbf{a} \cdot \nabla \chi_j + \int_{\Omega_j} \nu \nabla\chi_j \cdot (w \nabla v - v \nabla w). 
\end{split}
\]
By the Cauchy-Schwarz inequality and \eqref{eq:2-21}
\[
\begin{split}
\left \lvert \int_{\Omega_j} v w \, \mathbf{a} \cdot \nabla \chi_j \right \rvert 
& \le C_\textup{dPU} \frac{1}{\delta} \lVert \mathbf{a} \rVert_{\mathrm{L}^\infty(\Omega_j)}  \lVert v \rVert_{\mathrm{L}^2(\Omega_j)} \lVert w \rVert_{\mathrm{L}^2(\Omega_j)} \\
& \le C_\textup{dPU} \frac{1}{\delta} \frac{\lVert \mathbf{a} \rVert_{\mathrm{L}^\infty(\Omega_j)}}{\tilde{c}_{-,j}} \lVert v \rVert_{1,c, \Omega_j} \lVert w\rVert_{1,c, \Omega_j}.
\end{split}
\]
Therefore, proceeding for the other term as in Lemma \ref{lem:RCDsimLemma3-7i}, 
\[
\lvert  a_j(v,\chi_j w)- a_j(\chi_j v,w) \rvert \le 
C_\textup{dPU} \Biggl ( \frac{\nu_{+,j}}{\sqrt{\tilde{c}_{-,j}\nu_{-,j}}} + \frac{\lVert \mathbf{a} \rVert_{\mathrm{L}^\infty(\Omega_j)}}{\tilde{c}_{-,j}} \Biggr)  \frac{1}{\delta} 
\lVert v \rVert_{1,c, \Omega_j} \lVert w\rVert_{1,c, \Omega_j}.
\]

For $C_{DB,j}$ we find the continuous analogue of the left-hand side in \eqref{eq:cDB}: 
for $\mathbf{V}^j, \mathbf{W}^j \in \mathbb{R}^{n_j}$ vectors of degrees of freedom for local functions $v_h, w_h \in \mathcal{V}^h_j$
\[
\begin{split}
&\lvert ([D_j, B_j] \mathbf{V}^j, \mathbf{W}^j) \rvert 
= \lvert (B_j \mathbf{V}^j, D_j\mathbf{W}^j) - (B_j D_j \mathbf{V}^j, \mathbf{W}^j)\rvert \\
& = \lvert a_j(v_h,\Pi^h(\chi_j w_h)) - a_j(\Pi^h(\chi_j v_h),w_h)  \rvert \\
& = \lvert a_j((I-\Pi^h)(\chi_j v_h),w_h) - a_j(v_h,(I-\Pi^h)(\chi_j w_h)) \\
& \qquad + a_j(v_h,\chi_j w_h) - a_j(\chi_j v_h,w_h) \rvert.
\end{split}
\]
Now, by the continuity property \eqref{eq:RCDloccont} of $a_j$ and \eqref{eq:RCDsim3-10bis}
\[
\lvert a_j((I-\Pi^h)(\chi_j v_h),w_h)  \rvert 
\le C_{\textup{cont},j}  C_{\textup{err},j}  \lVert v_h \rVert_{1,c, \Omega_j} \lVert w_h \rVert_{1,c, \Omega_j}
\]
and similarly for $\lvert a_j(v_h,(I-\Pi^h)(\chi_j w_h)) \rvert$, so, combining with \eqref{eq:RCDsimLemma3-7iDB}, we get 
\[
\begin{split}
&\lvert ([D_j, B_j] \mathbf{V}^j, \mathbf{W}^j) \rvert  \\
&\le \left [ C_\textup{dPU} \left ( \frac{\nu_{+,j}}{\sqrt{\tilde{c}_{-,j}\nu_{-,j}}} + \frac{\lVert \mathbf{a} \rVert_{\mathrm{L}^\infty(\Omega_j)}}{\tilde{c}_{-,j}} \right)  \frac{1}{\delta} + 2 C_{\textup{cont},j}  C_{\textup{err},j}  \right ]
\lVert v_h \rVert_{1,c, \Omega_j} \lVert w_h\rVert_{1,c, \Omega_j} \\
& = \left [ C_\textup{dPU} \left ( \frac{\nu_{+,j}}{\sqrt{\tilde{c}_{-,j}\nu_{-,j}}} + \frac{\lVert \mathbf{a} \rVert_{\mathrm{L}^\infty(\Omega_j)}}{\tilde{c}_{-,j}} \right)  \frac{1}{\delta} + 2 C_{\textup{cont},j}  C_{\textup{err},j}  \right ]
 \lVert \mathbf{V}^j \rVert_{\Omega_j}  \lVert \mathbf{W}^j \rVert_{\Omega_j}.
\end{split}
\]
\end{proof}

\subsection{Summary of the constants}
For the heterogeneous reaction-convection-diffusion problem \eqref{eq:RCDbvp} we have proved that the upper and lower bounds of Theorem~\ref{thm:main} 

\begin{equation*}
\max_{\mathbf{V} \in \mathbb{R}^n} \; \frac{\Vert M^{-1} A \mathbf{V} \rVert_\Omega}{\Vert \mathbf{V} \rVert_\Omega} 
\le \sqrt{\Lambda_0 \Lambda_1} \max_{j=1,\dots,N} \{ C_{D,j} (C_{\textup{stab},j} C_{DB,j} + C_{D,j})\}
\end{equation*}
\begin{multline*}
\min_{\mathbf{V} \in \mathbb{R}^n} \frac{\lvert (F_\Omega \mathbf{V}, M^{-1}A \mathbf{V}) \rvert}{ \lVert \mathbf{V} \rVert_\Omega^2} \ge 
 \frac{1}{\Lambda_0} 
-  \Lambda_1 \max_{j=1,\dots,N} \{ C_{D,j} C_{\textup{stab},j}C_{DB,j} \} \\
- \Lambda_1 \max_{j=1,\dots,N} \{ C_{DF,j} (C_{\textup{stab},j}C_{DB,j} +C_{D,j}) \}
\end{multline*}

\noindent
hold with the constants {$\Lambda_0$, $\Lambda_1$ from Lemma~\ref{lem:RCDconverseStSp}, Lemma~\ref{lem:RCDfinOvr}:}
\[
\begin{split}
& \Lambda_0 = \max_{j=1,\dots,N} \, \# \Lambda(j), \quad \text{where } \Lambda(j) = \set{j' | \Omega_j \cap \Omega_{j'} \ne \emptyset}\\
& \Lambda_1 = \max \set{m | \exists \, j_1 \ne \dots \ne j_m \text{ such that } \textup{meas}(\Omega_{j_1} \cap \dots \cap \Omega_{j_m}) \ne 0}
\end{split}
\]


\noindent
$C_{\textup{stab},j}$ from Lemma~\ref{lem:RCDstab}: 
\[
C_{\textup{stab},j} = 1
\]

\noindent
$C_{D,j}$ from \eqref{eq:RCDsim3-16const}:
\[
C_{D,j} =  \sqrt{2} \left ( 1 + C_\textup{dPU} \sqrt{\frac{\nu_{+,j}}{\tilde{c}_{-,j}}} \frac{1}{\delta} \right )  + C_{\textup{err},j}
\]

\noindent
$C_{DF,j}$ from \eqref{eq:RCD-cDFconst}:
\[
C_{DF,j} = C_\textup{dPU} \frac{\nu_{+,j}}{\sqrt{\tilde{c}_{-,j}\nu_{-,j}}} \frac{1}{\delta} + 2 C_{\textup{err},j}
\]

\noindent
$C_{DB,j}$ from \eqref{eq:RCD-cDBconst}:
\[
C_{DB,j} = C_\textup{dPU} \left ( \frac{\nu_{+,j}}{\sqrt{\tilde{c}_{-,j}\nu_{-,j}}} + \frac{\lVert \mathbf{a} \rVert_{\mathrm{L}^\infty(\Omega_j)}}{\tilde{c}_{-,j}} \right)  \frac{1}{\delta} + 2 C_{\textup{cont},j}  C_{\textup{err},j}  
\]

\noindent where from  \eqref{eq:Ccontj}
\[
C_{\textup{cont},j} = \frac{\tilde{c}_{+,j}}{\tilde{c}_{-,j}} \frac{\nu_{+,j}}{\nu_{-,j}} +
\frac{1}{2}  \frac{\lVert \mathbf{a} \rVert_{\mathrm{L}^\infty(\Omega_j)}}{\sqrt{\tilde{c}_{-,j}\nu_{-,j} }} +
\frac{\lVert \alpha \rVert_{\mathrm{L}^\infty(\Omega)} C_\textup{tr}}{\sqrt{\tilde{c}_{-,j}}} 
\left( \frac{1}{H_\textup{sub} \sqrt{\tilde{c}_{-,j}}} + \frac{1}{2\sqrt{\nu_{-,j}}} \right)
\]
and from \eqref{eq:Cerrj}
\[
C_{\textup{err},j} = C_\Pi \, c(r,d) \, C_\textup{dPU} \, \sqrt{C_\textup{inv}} \left (\sqrt{\frac{\nu_{+,j}}{\nu_{-,j}}} + \sqrt{\frac{\tilde{c}_{+,j}}{\nu_{-,j}}} h \right) \frac{h}{\delta},
\]
and $C_\textup{tr}$ appears in Lemma~\ref{lem:MTI}, $C_\Pi$ in \eqref{eq:errEst}, $C_\textup{dPU}$ in \eqref{eq:2-21}, and $C_\textup{inv}$ is a standard inverse inequality constant (see the proof of Lemma \ref{lem:errInterpChi} for more details), and $c(r,d) = \max_{\vert \gamma\vert = r}\sum_{\beta \,|\, 0 < \beta\le\gamma}\binom{\gamma}{\beta}$.

These estimates can be then specialized for particular regimes of the physical coefficients of the equation or of the numerical parameters. 
Note that the lower bound is interesting only if the positive term dominates the negative ones in the considered regime. 
In particular, if the overlap $\delta$ is sufficiently generous, both negative terms can be made arbitrarily small. So we have proved for the SORAS algorithm that a larger overlap helps the convergence of the domain decomposition preconditioner, as expected.   

For instance, if the equation in \eqref{eq:RCDbvp} derives from a backward Euler scheme for solving the time-dependent convection-diffusion problem, we would have $\tilde{c} = 1/\Delta t$, where $\Delta t$ is the time step of the scheme. Now, note that the constants $C_{D,j}, C_{DB,j}, C_{DF,j}$ appearing in the negative terms contain the adimensional quantities 
\[
\sqrt{\frac{\, \nu \,}{\, \tilde{c} \, }} \frac{1}{\delta}, \qquad    \frac{\lVert \mathbf{a} \rVert_{\mathrm{L}^\infty(\Omega_j)}}{\tilde{c}} \frac{1}{\delta}, 
\]
(where we have considered the homogeneous case for simplicity). 
Hence for these quantities to be small, the overlap $\delta$ should be asymptotically bigger than the square root of the diffusion area covered in a time step, and than the convection distance covered in a time step. 
Therefore, on the one hand when the diffusion coefficient or the convection velocity grow, the overlap size should be increased; on the other hand if the time discretization step shrinks, one could take a smaller overlap. 
Furthermore, the interpolation constant $C_{\textup{err},j}$, also appearing in $C_{D,j}, C_{DB,j}, C_{DF,j}$, leads to restrictions involving the mesh size $h$ and the overlap $\delta$. 

The lower bound on the field of values could be improved by designing a suitable coarse space to add a second level to the standard SORAS preconditioner.
Note that for generic symmetric positive definite problems, robust lower bounds \emph{on the spectrum} can be indeed obtained in this manner \cite{HaJoNa:soras:2015}, but 
for generic non-self-adjoint or indefinite problems this currently constitutes a major challenge. 

\subsection{Numerical experiments}

To conclude, we test numerically the performance of the preconditioner on the reaction-convection-diffusion problem~\eqref{eq:RCDbvp} with $\Omega$ a rectangle $[0,N\cdot0.2]\times[0,0.2]$ (where $N$ is the number of subdomains), $\Gamma_D = \Gamma$, $\Gamma_R=\emptyset$; for the local problems of the preconditioner, Robin transmission conditions with parameter $\alpha$ as in \eqref{eq:exalpha} are imposed on the subdomain interfaces. 
In Tables~\ref{tab:rotating},\ref{tab:negdiv},\ref{tab:normal} we take $N=5$ and $f=100\exp{\{-10((x-0.5)^2+(y-0.1)^2)\}}$, which is centered at the barycenter of $\Omega$. In Tables~\ref{tab:wscalingReg}, \ref{tab:wscalingMetis} we vary $N$ to test weak scaling, where the size of the problem increases about proportionally to $N$ (note that the width of $\Omega$ above is proportional to $N$), and we take $f=100\exp{\{-10((x-0.1)^2+(y-0.1)^2)\}}$, which is centered on the left of $\Omega$. 
The problem is discretized by piece-wise linear Lagrange finite elements on a uniform triangular mesh with $60$ points on the vertical side of the rectangle and $N \cdot 60$ points on the horizontal one, resulting in 
$18361$ degrees of freedom for $N=5$, and $7381$, $14701$, $29341$, $58621$, $117181$, $234301$ degrees of freedom for $N=2,4,8,16,32,64$ respectively.  
In Tables~\ref{tab:rotating}--\ref{tab:wscalingReg} the domain is partitioned into $N$ vertical strips, while in Table~\ref{tab:wscalingMetis} we consider arbitrary partitions into $N$ irregular subdomains obtained using the automatic mesh partitioner METIS \cite{KaKu:1998:metis}; then each subdomain is augmented with mesh elements layers of size $\delta/2$ to obtain the overlapping decomposition (the total width of the overlap between two subdomains is then $\delta$). 

GMRES with right preconditioning is stopped when the relative residual is reduced by $10^{-6}$. In Tables~\ref{tab:rotating},\ref{tab:negdiv},\ref{tab:normal}  we take a zero initial guess, while in Tables~\ref{tab:wscalingReg}, \ref{tab:wscalingMetis} we take a random initial guess. We test SORAS preconditioner \eqref{eq:SORAS} and also ORAS preconditioner: 
\[
M^{-1}_{ORAS} \coloneqq \sum_{j=1}^N R_j^T D_j B_j^{-1} R_j. 
\]
In the tables we use \# to denote the number of iterations for convergence. To apply the preconditioner, the local problems in each subdomain are solved with the direct solver  MUMPS \cite{amestoy:2001:fully}.  
All the computations are done in the \href{https://doc.freefem.org/documentation/ffddm/index.html}{ffddm} framework of \href{https://freefem.org/}{FreeFEM}, an open source domain specific language (DSL) specialised for solving boundary value problems with variational methods. 

\begin{table}{
\begin{center}
\begin{tabular}{|l|c|c|c|c|}
\hline
 & \multicolumn{4}{c|}{\#SORAS(ORAS)}\tabularnewline
\cline{2-5} $\mathbf{a}= 2\pi[-(y-0.1), (x-0.5)]^T$ & $\delta=2h$ & $\delta=4h$ & $\delta=6h$ & $\delta=8h$ \tabularnewline
 \hline
 $c_0=1, \; \nu=1$        & 21(18) & 20(14) & 20(12) & 19(11) \tabularnewline
 \hline
 $c_0=1, \; \nu=0.001$ & 14(9) & 13(6) & 12(5) & 12(5) \tabularnewline
 \hline
 $c_0=0.001, \; \nu=1$        & 21(20) & 20(15) & 20(13) & 19(11) \tabularnewline
 \hline
 $c_0=0.001, \; \nu=0.001$ & 15(10) & 14(7) & 13(5) & 13(5) \tabularnewline
 \hline
\end{tabular}
\caption{Iteration numbers for SORAS(ORAS) preconditioners in the case of a convection field $\mathbf{a}= 2\pi[-(y-0.1), (x-0.5)]^T$, for different values of the overlap $\delta$, the reaction coefficient $c_0$ and the viscosity $\nu$. 
The domain is decomposed into $N=5$ overlapping vertical strips and the global problem has 18361 degrees of freedom.} 
\label{tab:rotating} 
\end{center}
}
\end{table}
We examine several configurations for the coefficients in \eqref{eq:RCDbvp}. First, in Table~\ref{tab:rotating} we consider a rotating convection field $\mathbf{a}= 2\pi[-(y-0.1), (x-0.5)]^T$ and small/large values for the reaction coefficient $c_0$ and the viscosity $\nu$. We can see that a larger overlap helps the convergence of the  preconditioners, as expected. The number of iterations appears not very sensitive to $c_0$, while it increases when $\nu$ is larger. 
ORAS preconditioner performs better than SORAS preconditioner, but currently the convergence of ORAS preconditioner can not be rigorously analyzed. 

\begin{table}{
\begin{center}
\begin{tabular}{|l|c|c|c|c|}
\hline
 & \multicolumn{4}{c|}{\#SORAS(ORAS)}\tabularnewline
\cline{2-5} $\mathbf{a}= [-x, -y]^T$ & $\delta=2h$ & $\delta=4h$ & $\delta=6h$ & $\delta=8h$ \tabularnewline
 \hline
 $c_0=1, \; \nu=1$        & 21(19) & 21(14) & 20(13) & 20(11) \tabularnewline
 \hline
 $c_0=1, \; \nu=0.001$ & 16(7) & 16(7) & 16(6) & 16(6) \tabularnewline
 \hline
 $c_0=0.001, \; \nu=1$        & 22(24) & 22(18) & 22(15) & 21(13) \tabularnewline
 \hline
 $c_0=0.001, \; \nu=0.001$ & 17(8) & 16(7) & 16(7) & 16(6) \tabularnewline
 \hline
\end{tabular}
\caption{Repeat of Table~\ref{tab:rotating} but with $\mathbf{a}= [-x, -y]^T$. In this case $\dive \mathbf{a}=-2$ is negative and $\tilde{c}=c_0-1$ does not verify condition~\eqref{eq:hypctilde}.} 
\label{tab:negdiv} 
\end{center}
}
\end{table}
Then, in Table~\ref{tab:negdiv} we take $\mathbf{a}= [-x, -y]^T$, which has negative divergence $\dive \mathbf{a}=-2$, to test the robustness of the method when condition~\eqref{eq:hypctilde} on the positiveness of $\tilde{c}$ is violated: in this case, $\tilde{c}=c_0-1$, so $\tilde{c}=0$,  $\tilde{c}=-0.999$ for $c_0=1$, $c_0=0.001$ respectively. We can observe that both preconditioners still perform well. 

\begin{table}{
\begin{center}
\begin{tabular}{|l|c|c|c|c|}
\hline
 & \multicolumn{4}{c|}{\#SORAS(ORAS)}\tabularnewline
\cline{2-5} $\mathbf{a}=[1, 0]^T$ & $\delta=2h$ & $\delta=4h$ & $\delta=6h$ & $\delta=8h$ \tabularnewline
 \hline
 $c_0=1, \; \nu=1$        & 20(18) & 20(15) & 20(13) & 20(12) \tabularnewline
 \hline
 $c_0=1, \; \nu=0.001$ & 11(6) & 11(5) & 11(5) & 11(5) \tabularnewline
 \hline
 $c_0=0.001, \; \nu=1$        & 20(20) & 20(16) & 20(14) & 20(13) \tabularnewline
 \hline
 $c_0=0.001, \; \nu=0.001$ & 12(6) & 12(5) & 12(5) & 12(5) \tabularnewline
 \hline
\end{tabular}
\caption{Repeat of Table~\ref{tab:rotating} but with $\mathbf{a}=[1, 0]^T$ and with Streamline Upwind Petrov-Galerkin stabilization for the Galerkin approximation.} 
\label{tab:normal} 
\end{center}
}
\end{table}
Finally, in Table~\ref{tab:normal} we consider a horizontal convecting field $\mathbf{a}=[1, 0]^T$, which is normal to the interfaces between subdomains. In this case non-physical numerical instabilities appear in the solution. Note that this is a discretization issue, not related to the preconditioner: the robust direct solver MUMPS yields the same instabilities in the numerical solution as the preconditioned GMRES solver. 
To stabilize the discrete variational formulation, we use the Streamline Upwind Petrov-Galerkin (SUPG) method, which adds to the Galerkin approximation the following term (see for instance \cite[§11.8.6]{quarteroni:2009:book}): 
\[
\mathcal{L}_h(u_h,f;v_h) = \theta \sum_{\tau\in \mathcal{T}^h} \int_\tau (\mathcal{L}u_h-f) \, \frac{h_\tau}{\rvert \mathbf{a}\rvert} \, \mathcal{L}_{SS}v_h, 
\]
where $\theta$ is a stabilization parameter (here we choose $\theta=0.15$), $h_\tau$ is the diameter of the mesh element $\tau$, and 
\[
\mathcal{L}u_h = c_0 u_h + \dive(\mathbf{a}u_h) -\dive(\nu \nabla u_h), \quad
\mathcal{L}_{SS}v_h = \frac{1}{2}\dive(\mathbf{a}v_h)+\frac{1}{2}\mathbf{a}\cdot\nabla v_h.
\]
We can see that ORAS preconditioner performs better than SORAS preconditioner, but depends more on the overlap size. 

\begin{table}{
\begin{center}
\begin{tabular}{|l|c|c|c|c|c|c|}
\hline
 & \multicolumn{6}{c|}{\#SORAS(ORAS)}\tabularnewline
\cline{2-7} $\mathbf{a}=[1, 0]^T$ & $N=2$ & $N=4$ & $N=8$ & $N=16$ & $N=32$ & $N=64$ \tabularnewline
 \hline
 $c_0=1, \; \nu=1$        & 18(15) & 23(18) & 28(19) & 35(19) & 36(19) & 36(19) \tabularnewline
 \hline
 $c_0=1, \; \nu=0.001$ & 8(3) & 10(5) & 16(8) & 23(16) & 37(32) & 63(62) \tabularnewline
 \hline
 $c_0=0.001, \; \nu=1$        & 18(15) & 23(19) & 29(21) & 35(21) & 36(21) & 36(21) \tabularnewline
 \hline
 $c_0=0.001, \; \nu=0.001$ & 8(3) & 10(5) & 16(8) & 24(16) & 40(32) & 71(64) \tabularnewline
 \hline
\end{tabular}
\caption{Weak scaling test, with a regular decomposition into $N$ vertical strips ($\delta=4h$). The global problem has $7381$, $14701$, $29341$, $58621$, $117181$, $234301$ degrees of freedom for $N=2,4,8,16,32,64$ subdomains respectively.} 
\label{tab:wscalingReg} 
\end{center}
}
\end{table}

\begin{table}{
\begin{center}
\begin{tabular}{|l|c|c|c|c|c|c|}
\hline
 & \multicolumn{6}{c|}{\#SORAS(ORAS)}\tabularnewline
\cline{2-7} $\mathbf{a}=[1, 0]^T$ & $N=2$ & $N=4$ & $N=8$ & $N=16$ & $N=32$ & $N=64$ \tabularnewline
 \hline
 $c_0=1, \; \nu=1$        & 21(17) & 30(22) & 40(23) & 48(23) & 53(23) & 55(23) \tabularnewline
 \hline
 $c_0=1, \; \nu=0.001$ & 10(4) & 12(5) & 17(9) & 25(17) & 38(32) & 63(63) \tabularnewline
 \hline
 $c_0=0.001, \; \nu=1$        & 21(18) & 30(25) & 40(28) & 48(27) & 54(28) & 57(29) \tabularnewline
 \hline
 $c_0=0.001, \; \nu=0.001$ & 10(4) & 12(5) & 18(9) & 26(17) & 42(33) & 73(65) \tabularnewline
 \hline
\end{tabular}
\caption{Weak scaling test as in Table~\ref{tab:wscalingReg}, but with METIS decomposition into $N$ arbitrary-shaped subdomains.} 
\label{tab:wscalingMetis} 
\end{center}
}
\end{table}
Now, again in this third configuration with $\mathbf{a}=[1, 0]^T$ and SUPG stabilization, we perform a weak scaling test by taking $\Omega = [0,N\cdot0.2]\times[0,0.2]$ for increasing number of subdomains $N$. First we consider a regular decomposition into vertical strips (Table~\ref{tab:wscalingReg}) and then an arbitrary decomposition made by METIS (Table~\ref{tab:wscalingMetis}). We fix the overlap $\delta=4h$. 
Comparing Table~\ref{tab:wscalingReg} with Table~\ref{tab:wscalingMetis}, we can see that the number of iterations is higher when taking arbitrary-shaped subdomains. Moreover, in the cases with $\nu=0.001$, convergence deteriorates with $N$, which shows the need of designing robust two-level preconditioners.



\color{black}


%
%

\bibliographystyle{spmpsci}      
\bibliography{paper_BCNT}   

%
%

\end{document}